\newcommand{\rd}{\mathrm{d}}
\newcommand{\RR}{\mathbb{R}}
\newcommand{\Sb}{\mathbb{S}}
\newcommand{\zd}[2]{z^{\left( #1 \right)}_{#2}}
\newcommand{\ud}[2]{u^{\left( #1 \right)}_{ #2 }}
\newcommand{\rank}{\mathop{\mathrm{rank}}\nolimits}
\newcommand{\Range}{\mathop{\mathrm{range}}\nolimits}
\newcommand{\Null}{\mathop{\mathrm{null}}\nolimits}
\newcommand{\Car}{\mathop{\mathrm{car}}\nolimits}
\newcommand{\CM}{\mathcal{M}}
\newcommand{\dgp}{\overline{\nabla}_{\mathrm{P}}}
\newtheorem{theorem}{Theorem}[section]
\newtheorem{lemma}[theorem]{Lemma}
\newtheorem{proposition}[theorem]{Proposition}
\theoremstyle{definition}
\newtheorem{definition}[theorem]{Definition}
\newtheorem{example}[theorem]{Example}
\theoremstyle{remark}
\newtheorem{remark}[theorem]{Remark}
\title[Discrete gradient methods for differential-algebraic equations]{Linear gradient structures and discrete gradient methods for conservative/dissipative differential-algebraic equations}
\author{Shun Sato}
\address{Graduate School of Information Science and Technology, The University of Tokyo, Bunkyo-ku, Tokyo, Japan}
\email{shun\_sato@mist.i.u-tokyo.ac.jp}
\thanks{The first author was supported in part by JSPS Research Fellowship for Young Scientists.}
\date{May, 2018}
\keywords{Discrete gradient methods \and Differential-algebraic equations \and Linear gradient form \and Conservation law \and Dissipation law}
\subjclass{65L80}
\begin{document}

\maketitle

\begin{abstract}
In this paper, we consider the use of discrete gradients for differential-algebraic equations (DAEs) 
with a conservation/dissipation law. 
As one of the most popular numerical methods for conservative/dissipative ordinary differential equations, 
the framework of discrete gradient methods has been intensively developed over recent decades. 
Although discrete gradients have been applied to several specific conservative/dissipative DAEs, 
no unified framework for DAEs has yet been constructed. 
In this paper, we move toward the establishment of such a framework, 
and introduce concepts including an appropriate linear gradient structure for DAEs. 
Then, we reveal that the simple use of discrete gradients does not imply the discrete conservation/dissipation laws. 
Fortunately, however, we can successfully construct a new discrete gradient method 
for the case of index-1 DAEs. 
We believe this first attempt provides an indispensable basis for constructing a unified framework of discrete gradient methods for DAEs. 
\end{abstract}

\section{Introduction}
\label{sec_intro}

In this paper, we consider the geometric integration of autonomous differential-algebraic equations~(DAEs) of the form
\begin{equation}\label{eq_dae}
A \dot{z} = f(z)
\end{equation}
with a conservation or dissipation law. 
The class of DAEs with a conservation or dissipation law involves mechanical systems with holonomic/nonholonomic constraints~\cite{Bloch2015} and the spatial discretization of the conservative/dissipative evolutionary equations with a mixed derivative~\cite{SM2017arx}. 
In particular, our aim is to develop a framework for constructing dissipative/conservative numerical methods using the discrete gradient~\cite{G1996}. 

For the numerical integration of conservative/dissipative ordinary differential equations (ODEs), 
numerical methods inheriting the conservation/dissipation law are known to be qualitatively better than general purpose techniques such as Runge--Kutta methods~(see, e.g., \cite{WN2016arx} for conservative ODEs and \cite{SMSF2015} for dissipative ODEs). 
The discrete gradient method is one of the most popular conservative/dissipative numerical methods 
(for other methods, see, e.g., \cite{WBN2016,WBN2017,K2016,IY2016}). 

A brief history of the discrete gradient method is summarized below. 
Though a similar technique has been reported in the literature, 
the discrete gradient was first (explicitly) defined by Gonzalez~\cite{G1996}, 
who constructed Hamiltonian-preserving numerical methods for Hamiltonian systems such as
\begin{equation}\label{eq_Hamiltonian}
	\dot{z} = J \nabla H(z), \qquad J = \begin{pmatrix} O_n & I_n \\ -I_n & O_n \end{pmatrix}, 
\end{equation}
where $ z : [0,T) \to \RR^{2n} $ is a dependent variable ($ \dot{z} $ denotes the time derivative of $z$), $I_n \in \RR^{n\times n} $ denotes the identity matrix, $O_n \in \RR^{n \times n}$ denotes the zero matrix, and $H : \RR^{2n} \to \RR $ is the Hamiltonian. 
Thanks to the skew-symmetry of $J$, conservation of the Hamiltonian $ H $ can be confirmed as follows: 
\[ \frac{\rd}{\rd t} H (z) = \langle \nabla H(z) , \dot{z} \rangle = \langle \nabla H(z) , J \nabla H(z) \rangle = 0, \]
where $ \langle \cdot, \cdot \rangle $ is the standard inner product of $ \RR^{2n} $. 
The one-step method (usually called a discrete gradient method)
\begin{equation}\label{eq_Hamiltonian_dg}
	\frac{\zd{m+1}{}-\zd{m}{}}{\Delta t} = J \overline{\nabla} H \left( \zd{m+1}{} , \zd{m}{}  \right)
\end{equation}
preserves the Hamiltonian, 
where $ \zd{m}{} \approx z(m \Delta t)  $ ($ \Delta t > 0 $ is the step size).  The discrete gradient $ \overline{\nabla} H $ is defined as follows. 

\begin{definition}\label{def_dg}
	Let $ V: \RR^d \to \RR $ be a differentiable function ($d$ is a positive integer). 
	Then, the continuous map $ \overline{\nabla} V : \RR^d \times \RR^d \to \RR^d $ is a discrete gradient of $V$ if it satisfies
	\begin{enumerate}
		\item $ \langle \overline{\nabla} V (z,z') , z - z' \rangle = V(z) - V(z') $ holds for any $ z , z' \in \RR^d $; 
		\item $ \overline{\nabla} V ( z, z ) = \nabla V(z) $ holds for any $ z \in \RR^d $.  
	\end{enumerate}
\end{definition}

Hereafter, the first property is referred to the ``discrete chain rule,'' 
as it is a discrete counterpart of the usual chain rule. 
Whereas the second property merely maintains consistency, 
the first property plays an essential role in the proof of the discrete conservation law: 
\begin{align*}
	\frac{H \big( \zd{m+1}{} \big) - H \big(\zd{m}{} \big)}{\Delta t}
	&= \left\langle \overline{\nabla} H \Big(\zd{m+1}{}, \zd{m}{} \Big) , \frac{\zd{m+1}{} - \zd{m}{}}{\Delta t} \right\rangle \\
	&= \left\langle \overline{\nabla} H \Big(\zd{m+1}{}, \zd{m}{} \Big) , J \overline{\nabla} H \Big( \zd{m+1}{} , \zd{m}{}  \Big) \right\rangle  = 0
\end{align*}
(note that the proof of the discrete conservation law is quite similar to that of its continuous counterpart). 
Note that the discrete gradient is not, in general, unique, 
and there are several method of constructing discrete gradients (see, e.g., \cite{G1996,IA1988,CGMMOOQ2012}).

Moreover, Quispel--Turner~\cite{QT1996} extended the discrete gradient method to construct conservative numerical methods for the \emph{skew-gradient system}
\begin{equation}\label{eq_lg}
	\dot{z} = S(z) \nabla V(z),
\end{equation}
where $ z: [0,T) \to \RR^d $ is a dependent variable, $V:\RR^d \to \RR$ is a differentiable function, 
and $ S : \RR^d \to \RR^{d \times d}$ is a map such that $ S(z) $ is skew-symmetric for any $ z \in \RR^d $. 
The skew-symmetry of $S(z)$ means that the conservation of $V$ can be shown in a manner similar to that of the Hamiltonian above, 
i.e., using only the ``chain rule'' and ``skew-symmetry.''
Thus, by appropriately discretizing $ S $, conservative numerical methods can be constructed in a similar manner (see Section~\ref{subsec_pre_dg}). 

Though the skew-gradient form may appear restrictive in comparison with general conservative ODEs, 
Quispel--Capel~\cite{QC1996} showed that any conservative ODEs can be written in the appropriate skew-gradient form (see Proposition~\ref{prop_MQR_conservative}). 
Therefore, in principle, the discrete gradient method can be used for any conservative ODEs. 
Moreover, McLachlan--Quispel--Robidoux~\cite{MQR1999} showed that the discrete gradient can also be used for 
ODEs with one or more conserved quantities and/or dissipated quantities (see Proposition~\ref{prop_MQR_dissipative} for the case of a single dissipated quantity, where $S(z)$ is a negative semidefinite matrix). 

Furthermore, for variational partial differential equations~(PDEs) with 
a conservation/dissipation law, 
Furihata--Mori~\cite{FM1998} (see also \cite{F1999}) devised the discrete variational derivative method (DVDM) (see~\cite{FM2011} for details). 
Though DVDM and the discrete gradient method were investigated independently, 
DVDM is now understood to be a combination of the discrete gradient method and an appropriate spatial discretization. 
Thus, evolutionary equations with a conservation/dissipation law can be regarded as the target of the discrete gradient methods 
(see \cite{CGMMOOQ2012} for details on discrete gradient methods for PDEs).

\vspace{5pt}

We now turn our attention to conservative/dissipative methods for DAEs. 

Gonzalez~\cite{G1999} showed that discrete gradients can be used for the 
conservative numerical integration of mechanical systems with a holonomic constraint. 
Until now, the discrete gradient method has been employed for various mechanical systems with holonomic and/or nonholonomic constraints having the conservation/dissipation property~(e.g., \cite{Be2006,UB2010}). 
As the  discrete gradient maintains the discrete chain rule, 
one can construct conservative/dissipative methods by some clever discretization of the holonomic/nonholonomic constraints. 

More recently, a team including the present author pointed out that 
spatial discretizations of evolutionary equations with a mixed derivative, i.e., $ u_{tx} = f(u,u_x,\dots) $ (subscripts $t$ and $x$ denote temporal and spatial partial differentiation), 
turn out to be DAEs~\cite{SM2017arx}. 
As this class of PDEs covers numerous conservative systems, 
there have been several studies on conservative numerical methods~\cite{MYM2012,FSM2016,MCFM2017,S2018+} using the spirit of DVDM. 

In view of these existing studies on DAEs, 
it is quite natural to wish for a unified framework for 
applying discrete gradient methods to DAEs. 
Surprisingly, however, there have been no reports on this topic in the literature. 
Therefore, the ultimate aim of the present author is to construct a unified framework covering discrete gradient methods for DAEs. 
In this paper, as a first attempt to establish such a framework, 
we derive several basic results. 

Firstly, we observe what happens when a DAE has a linear conserved quantity (Section~\ref{sec_linear}). 
Though this observation is not directly related to discrete gradient methods, 
it illustrates how troublesome the concept of conserved quantities can be in DAEs (see Remark~\ref{rem_dae_vfm}). 
The observation implies that, when we deal with DAEs, 
the discrete conservation turns out to pose difficulties even when the conserved quantity is linear and the DAE has index-1 (see \cite{Ascher-Petzold1998} for a definition of the differential index); 
some linear conserved quantities are automatically conserved by the implicit Euler method (and some implicit Runge--Kutta methods), whereas others are not. 
This should be somewhat surprising, because it is widely known that, in ODEs, all linear conserved quantities are automatically conserved by all explicit and implicit Runge--Kutta methods. 
This difficulty in DAEs is caused by a lack of simple criteria for the conserved quantity: 
recall that the conserved quantity in ODEs can be characterized by the orthogonality condition $ \langle \nabla V(z) , f(z) \rangle = 0 $. 

\begin{remark}\label{rem_dae_vfm}
	As shown in Section~\ref{subsec_pre_dae}, a DAE is actually an ``implicit definition'' of a vector field $v$ on some manifold $ \CM $ (see Definition~\ref{def_regular}). 
	In this sense, one may feel that the rich results reported in previous studies (see Olver~\cite{Olver1995}, for example) are sufficient to deal with conservation laws in DAEs. 
	Unfortunately, however, we usually conduct the numerical integration of DAEs without explicitly detecting either the vector field $v$ or manifold $\CM$ (backward difference formulae (BDF) methods are typical examples; see~\cite{Ascher-Petzold1998}). 
	In particular, when we deal with cases in which the dimension of $ \CM $ is nearly $d$ (such as the spatial discretizations of PDEs with a mixed derivative), the computation based on $v$ and $ \CM$ is not practical because of the lack of sparsity. 
	In this sense, the criterion that ``$V$ is the conserved quantity if and only if $ \langle \nabla V(z), v(z) \rangle = 0  $ hold for any $ z \in \CM$'' is useless for our aim. 
	
	Moreover, as we do not want to step into $ v $ and $\CM $, 
	several elegant results on the geometric numerical integration of vector fields on manifolds (e.g., Celledoni--Owren~\cite{CO2014} for the conservative case and Celledoni--Eidnes--Owren--Ringholm~\cite{CEOR2018arx} for the dissipative case) cannot be employed. 
\end{remark}

To overcome this difficulty, 
we propose the concept of ``proper functions'' for DAEs (Section~\ref{sec_proper}). 
Though the definition of ``properness'' is unusual at first sight, 
we believe that it is indispensable for exploring the conservation/dissipation law of DAEs 
for the following reasons (each point will be described in Section~\ref{subsec_def_proper}): 
\begin{itemize}
	\item[(a)] it is a natural extension of the linear case.
 	\item[(b)] it is a natural extension of the ODE case.
	\item[(c)] it has simple criteria for conservation/dissipation laws.
	\item[(d)] it forms a sufficiently large subclass of functions.
\end{itemize}

Actually, the lack of the simple criteria for conservation/dissipation laws has already been identified in studies of Lyapunov functions for DAEs~\cite{B1987,LT2012}. 
The Lyapunov function is a dissipated quantity with several other properties (which depend on the context and expected consequence; see, e.g., \cite{Robinson,KR2011,V2002}). 
In the literature, subclasses of Lyapunov functions whose time differentiation can be expressed without information on $v$ and $\CM$, have been considered. 
Though these attempts were successful to a certain extent, they did not consider how restrictive the associated assumptions were. 
The set of proper functions is a new candidate for such a subclass (see point (b) in the list above), 
and its distinct advantage is one aspect of point (d): the assumption of properness does not lose generality (see Proposition~\ref{prop_proper}). 
This advantage strongly relies on the autonomous nature of the target DAE~\eqref{eq_dae} 
(see Remark~\ref{rem_lyapunov} for details).

Based on the concept of proper functions, 
we show that the conservation (resp. dissipation) law of a DAE extension~\eqref{eq_lgdae} of linear gradient form~\eqref{eq_lg} can be characterized by the skew-symmetry (resp. negative semidefiniteness) of a matrix (see Section~\ref{sec_lg}). 
This result is a natural DAE extension of McLachlan--Quispel--Robidoux~\cite{MQR1999}. 
Thus, it implies that the linear gradient DAE~\eqref{eq_lgdae} is an appropriate extension of the linear gradient form~\eqref{eq_lg}, 
and indicates the possibility of a unified framework for discrete gradient methods for DAEs. 

We then consider the use of discrete gradients for linear gradient DAEs~\eqref{eq_lgdae} in Section~\ref{sec_dg} 
(as the situation is the same when we deal with the dissipation law, 
we focus on the conservative case in this part ). 
However, because the numerical solution can break the constraint of the DAEs, 
the assumption of the discrete conservation law becomes somewhat restrictive. 
Still, existing conservative numerical methods for DAEs are covered by this case, 
and discrete conservation laws are maintained thanks to their special structures 
(see Examples~\ref{ex_hs_dg} and \ref{ex_ad_dg}). 

Moreover, for index-1 cases, 
we propose a new discrete gradient method 
that conserves both the desired conserved quantity and the constraint (Section~\ref{sec_dg_index1}). 
We introduce a new discrete gradient that is compatible with proper functions, and 
employ a reformulation~\eqref{eq_dae_red} that is appropriate for the simultaneous conservation of constraints and the conserved quantity. 

Discrete gradient schemes are numerically examined in Section~\ref{sec_num} 
using the sinh-Gordon equation~\eqref{eq_shG} as an example. 
The proposed discrete gradient method successfully conserves the conserved quantity and the constraint. 
However, as naturally expected, the computational cost becomes expensive. 

Though a more sophisticated framework for the discrete gradient method for DAEs is left for future work, 
the author believes that the contribution described in this paper will play an important role in these studies. 

The remainder of this paper is organized as follows. 
Section~\ref{sec_pre} is devoted to preliminaries such as discrete gradient methods for ODEs and some basic concepts of DAEs. 
Sections~\ref{sec_linear}--\ref{sec_num} have been essentially described above. 
The paper concludes in Section~\ref{sec_cr}.

\section{Preliminaries}
\label{sec_pre}

\subsection{Linear gradient systems and conservation/dissipation laws}
\label{subsec_pre_lg}

First, we define the conserved and dissipated quantities for ODEs of the form
\begin{equation}\label{eq_ode}
\dot{z} = f(z).
\end{equation}

\begin{definition}[Conserved quantity~\protect{(e.g., \cite[Definition~7.35]{Olver1995})}]\label{def_fi_ode}
	Let $ V : \RR^d \to \RR $ be a $ C^r $ function with $ r \ge 1 , \ d > 1$. 
	Then, $ V $ is called a conserved quantity of~\eqref{eq_ode} if $ \frac{\rd}{\rd t} V(z(t)) = 0 $ holds for any solution $z$ of~\eqref{eq_ode}.    
\end{definition}

\begin{definition}[Dissipated quantity~\protect{\protect{(cf. \cite[Definition~10.11]{Robinson}})}]\label{def_lf_ode}
	Let $ V : \RR^d \to \RR $ be a $ C^r $ function with $ r \ge 1 , \ d \ge 1$. 
	Then, $ V $ is called a dissipated quantity of~\eqref{eq_ode} if $ \frac{\rd}{\rd t} V(z(t)) \le 0 $ holds for any solution $z$ of~\eqref{eq_ode}.  
\end{definition}

When the ODE~\eqref{eq_ode} is written in the linear gradient form~\eqref{eq_lg}, 
i.e., $ f(z) = S(z) \nabla V(z) $, 
$ V $ is a conserved (resp. dissipated) quantity if $ S : \RR^d \to \RR^{d\times d} $ satisfies ``$ S(z) $ is skew-symmetric (resp. negative semidefinite) for any $ z \in \RR^d $.'' 
The matrix $ S $ is said to be skew-symmetric if $ S^{\top} = - S $ holds 
($ S^{\top} $ denotes the transpose of $S$), 
and is said to be negative semidefinite if $ \langle z, S z \rangle \le 0 $ holds for any $ z \in \RR^d $. 

McLachlan--Quispel--Robidoux~\cite{MQR1999} showed the converse:  
if the ODE~\eqref{eq_ode} has a conserved (resp. dissipated) quantity, 
it can be rewritten in the appropriate linear gradient form. 

\begin{proposition}[\protect{\cite[Proposition~2.1]{MQR1999}}]\label{prop_MQR_conservative}
	Let $ f : \RR^d \to \RR^d $ be a $C^r$ map with $ r \ge 1 , \  d>1 $, and $ V : \RR^d \to \RR $ be a $ C^{r+1} $ function such that $ \langle f(z) , \nabla V(z) \rangle = 0 $ for any $ z \in \RR^d $. 
	Then there exists a skew-symmetric matrix function $ S $ such that $C^r$  and $ f = S  \nabla V $ on the domain $ \{  z \in \RR^d \mid \nabla V(z) \neq 0  \} $. 
	Moreover, $S$ can be chosen so as to be bounded near every non-degenerate critical point. 
	Then,  $S$ is locally bounded if $V$ is a Morse function, that is, a smooth function in which all critical points are non-degenerate. 
\end{proposition}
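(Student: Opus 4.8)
The plan is to exhibit $S$ by an explicit formula and then read off all the required properties. Since $f(z)$ is orthogonal to $\nabla V(z)$ for every $z$, I would take, on the open set $\Omega := \{ z \in \RR^d \mid \nabla V(z) \neq 0 \}$,
\[
	S(z) := \frac{f(z)\,\nabla V(z)^{\top} - \nabla V(z)\,f(z)^{\top}}{\langle \nabla V(z), \nabla V(z) \rangle}.
\]
This matrix is skew-symmetric by inspection, and
\[
	S(z)\nabla V(z) = \frac{f(z)\,\langle \nabla V(z), \nabla V(z)\rangle - \nabla V(z)\,\langle f(z), \nabla V(z)\rangle}{\langle \nabla V(z), \nabla V(z) \rangle} = f(z)
\]
because $\langle f(z), \nabla V(z)\rangle = 0$. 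For the regularity, $\nabla V$ is $C^r$ (as $V \in C^{r+1}$), so the numerator is $C^r$ as a product of $C^r$ maps, while the denominator is $C^r$ and strictly positive on $\Omega$; hence $S \in C^r(\Omega)$ and $f = S\nabla V$ there.

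It then remains to control $S$ near critical points. The one point that makes this work is that $f$ automatically vanishes at a non-degenerate critical point $z_0$: differentiating the identity $\langle f(z), \nabla V(z)\rangle = 0$ at $z_0$ and using $\nabla V(z_0) = 0$ gives $\langle f(z_0), \nabla^2 V(z_0) h\rangle = 0$ for all $h \in \RR^d$, so invertibility of the Hessian forces $f(z_0) = 0$. Since $f$ and $\nabla V$ are then both $C^1$ maps vanishing at $z_0$, the numerator of $S$ is $O(\|z - z_0\|^2)$; meanwhile $\langle \nabla V(z), \nabla V(z)\rangle = \|\nabla^2 V(z_0)(z - z_0)\|^2 + o(\|z - z_0\|^2)$ is bounded below by a positive multiple of $\|z - z_0\|^2$ near $z_0$, so $S$ remains bounded as $z \to z_0$. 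If $V$ is a Morse function every critical point is of this kind, and $S$ is continuous (hence locally bounded) on $\Omega$, so $S$ is locally bounded on all of $\RR^d$.

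I expect the routine part to be the algebraic verification of skew-symmetry, of the identity $S\nabla V = f$, and of the $C^r$ regularity; the only genuinely substantive step is the boundedness claim, whose crux is the observation that $f$ vanishes at non-degenerate critical points, after which it is just a comparison of the quadratic vanishing orders of numerator and denominator. A small subtlety worth flagging in the writeup is that $S$ is only constructed, and only claimed to satisfy $f = S\nabla V$, on $\Omega$; near a critical point one asserts boundedness of $S$, not that it extends continuously there, and indeed at a degenerate critical point the displayed $S$ may well fail to be bounded.
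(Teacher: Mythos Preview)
Your proof is correct and uses the same explicit rank-two skew formula that the paper employs (the paper does not prove this proposition directly, citing it from McLachlan--Quispel--Robidoux, but its proof of the DAE generalization, Theorem~\ref{thm_exist_con}, specialises to exactly your $S(z) = \bigl(f(z)\nabla V(z)^{\top} - \nabla V(z)f(z)^{\top}\bigr)/\|\nabla V(z)\|_2^2$ when $A=I$).

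The only noteworthy difference is in the boundedness step. The paper invokes the Morse lemma to pass to coordinates in which $V$ is exactly quadratic, then derives $f(0)=0$ from a scaling argument and bounds $|S_{ij}|$ via $\|f\|/\|x\|\cdot\|x\|/\|\nabla V\|$. You instead differentiate the orthogonality identity $\langle f,\nabla V\rangle=0$ directly at the critical point to obtain $f(z_0)=0$, and then compare the quadratic vanishing orders of numerator and denominator by Taylor expansion. Your route is slightly more elementary in that it avoids the coordinate change; the Morse-lemma route makes the lower bound on $\|\nabla V\|^2$ immediate since $\nabla V$ is then linear. Either argument suffices.
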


\begin{proposition}[\protect{\cite[Proposition~2.8]{MQR1999}}]\label{prop_MQR_dissipative}
	Let $ f : \RR^d \to \RR^d $ be a $C^r$ map with $ r \ge 1 , \  d\ge1 $, and $ V : \RR^d \to \RR $ be a $ C^{r+1} $ function such that $ \langle f(z) , \nabla V(z) \rangle \le 0 $ for any $ z \in \RR^d $. 
	Then there exists a symmetric negative definite matrix function $ S $ such that $C^r$ and $ f = S  \nabla V $ on the domain $ \{  z \in \RR^d \mid \nabla V(z) \neq 0  \} $. 
\end{proposition}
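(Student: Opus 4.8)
The plan is to produce $S$ by an explicit formula on the open set $U := \{ z \in \RR^d \mid \nabla V(z) \neq 0 \}$, in the spirit of the skew-symmetric choice $S = \bigl( f \nabla V^{\top} - \nabla V f^{\top} \bigr)/|\nabla V|^2$ that proves the conservative statement (Proposition~\ref{prop_MQR_conservative}): there the antisymmetric rank-two term sends $\nabla V$ to $f$ while staying skew. Here we want instead a \emph{symmetric, sign-definite} $S$, so I would replace that term by a rank-one term carrying the sign of $\langle f, \nabla V \rangle$ together with a term that annihilates $\nabla V$ and restores definiteness on its orthogonal complement. Concretely,
\begin{equation*}
	S(z) := \frac{f(z)\, f(z)^{\top}}{\langle f(z), \nabla V(z) \rangle} \;-\; \left( I_d - \frac{\nabla V(z)\, \nabla V(z)^{\top}}{|\nabla V(z)|^2} \right) , \qquad z \in U ,
\end{equation*}
with $I_d$ the identity matrix.

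The verification is a handful of routine steps, carried out in this order. (1) Symmetry of $S(z)$ is immediate. (2) The factorization: since $f f^{\top} \nabla V = \langle f, \nabla V \rangle f$ and $\bigl( I_d - \nabla V \nabla V^{\top}/|\nabla V|^2 \bigr) \nabla V = 0$, one gets $S \nabla V = f$ on $U$. (3) Regularity: $f$ is $C^r$ by hypothesis, $\nabla V$ is $C^r$ because $V \in C^{r+1}$, and $|\nabla V|^2$ is $C^r$ and nonvanishing on $U$, so $S$ is $C^r$ on $U$ provided $\langle f, \nabla V \rangle$ is bounded away from zero there. (4) The sign: with $Q(z)$ the orthogonal projection onto $\nabla V(z)^{\perp}$, a short computation gives
\begin{equation*}
	\langle x, S(z) x \rangle = \frac{\langle f(z), x \rangle^2}{\langle f(z), \nabla V(z) \rangle} - |Q(z) x|^2 \qquad (x \in \RR^d),
\end{equation*}
and when $\langle f, \nabla V \rangle < 0$ both terms are $\le 0$ with sum zero only if $\langle f, x \rangle = 0$ and $Q(z) x = 0$; the latter puts $x$ in $\mathrm{span}\{\nabla V(z)\}$, and the former then forces $x = 0$. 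Hence $S(z)$ is negative definite exactly where $\langle f(z), \nabla V(z) \rangle < 0$.

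The point I expect to be the real obstacle is reconciling steps (3)--(4) with the hypothesis: the construction delivers a $C^r$, symmetric, negative definite $S$ on the open set $\{ z \in U \mid \langle f(z), \nabla V(z) \rangle < 0 \}$, but not in general on all of $U$, and this is unavoidable --- any symmetric negative definite $S(z)$ with $S(z)\nabla V(z) = f(z)$ satisfies $\langle \nabla V(z), f(z) \rangle = \langle \nabla V(z), S(z)\nabla V(z)\rangle < 0$, so no such $S$ exists at a point of $U$ where $f$ is tangent to the level set of $V$. The clean way to finish is therefore to adopt the strict-dissipation reading of the hypothesis, $\langle f, \nabla V \rangle < 0$ wherever $\nabla V \neq 0$ (the convention usual for Lyapunov functions; cf.\ \cite{B1987,LT2012}), under which the displayed $S$ works verbatim on all of $U$ and there is nothing more to do. If one keeps the weaker $\langle f, \nabla V\rangle \le 0$, the most that survives is negative \emph{semi}definiteness, obtained from the rank-one term alone on $\{ z \in U \mid \langle f(z), \nabla V(z)\rangle < 0 \}$ and extended by $-Q$ across the locus where (necessarily) $f$ vanishes; making that extension $C^r$ and globally valid, via a partition of unity, would then be the technical crux. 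Determining which reading is intended --- and, accordingly, either invoking nothing further or checking the gluing --- is the only step beyond the short computation above.
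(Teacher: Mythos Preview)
The paper does not supply its own proof of this proposition: it is quoted verbatim as \cite[Proposition~2.8]{MQR1999} and used as background. So there is no in-paper argument to compare your attempt against directly.

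That said, your construction and your diagnosis are both sound. The formula
\[
S(z) = \frac{f(z)\, f(z)^{\top}}{\langle f(z), \nabla V(z) \rangle} - \Bigl( I_d - \tfrac{\nabla V(z)\,\nabla V(z)^{\top}}{|\nabla V(z)|^2} \Bigr)
\]
does give a symmetric, $C^r$, negative definite $S$ with $S\nabla V = f$ on $\{z : \langle f(z),\nabla V(z)\rangle < 0\}$, and your observation that no negative \emph{definite} $S$ can exist at a point where $\nabla V \neq 0$ but $\langle f,\nabla V\rangle = 0$ is exactly right; the statement as reproduced here really does require the strict reading of the hypothesis. For what it is worth, the paper's own DAE analogue (Theorem~\ref{thm_exist_dis}) resolves the tension the other way: it keeps only the rank-one term $f(A^{\dagger}f)^{\top}/\langle A^{\dagger}f,\nabla V\rangle$, settles for negative \emph{semi}definiteness, and restricts the domain to $\{z : \langle A^{\dagger}f(z),\nabla V(z)\rangle \neq 0\}$ --- and the author explicitly flags this discrepancy with the cited proposition just before that theorem. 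Your added projection term is what buys definiteness over the MQR/paper rank-one choice, at the price of needing the strict inequality everywhere on $U$.
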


The above propositions show that 
the linear gradient form with the skew-symmetric (resp. negative semidefinite) matrix function $ S $ is a sufficiently large class for considering conservative (resp. dissipative) systems. 
Therefore, it is meaningful to consider the conservative/dissipative temporal discretization of such systems.  
This is realized using discrete gradients in the next section.

\subsection{Discrete gradient methods}
\label{subsec_pre_dg}

For the linear gradient ODEs~\eqref{eq_lg}, 
the discrete gradient method is defined as 
\begin{equation}\label{eq_lg_dg}
\frac{\zd{m+1}{} - \zd{m}{}}{\Delta t } = \overline{S} \left( \zd{m+1}{}, \zd{m}{} \right) \overline{\nabla} V  \left( \zd{m+1}{}, \zd{m}{} \right), 
\end{equation}
where $ \overline{S} : \RR^d \times \RR^d \to \RR^{ d \times d } $ is a consistent approximation of $ S $ (i.e., $ \overline{S} (z,z) =  S(z) $) 
such that $ \overline{S} (z,z') $ is skew-symmetric (resp. negative semidefinite) for any $ z ,z' \in \RR^d $. 
As
\begin{align*}
\frac{V \big(\zd{m+1}{} \big) - V \big( \zd{m}{} \big) }{\Delta t }
&= \left\langle \overline{\nabla} V \Big(\zd{m+1}{}, \zd{m}{} \Big) , \frac{\zd{m+1}{} - \zd{m}{}}{\Delta t} \right\rangle \\
&= \left\langle \overline{\nabla} V \Big(\zd{m+1}{}, \zd{m}{} \Big) , \overline{S}  \Big( \zd{m+1}{} , \zd{m}{}  \Big)  \overline{\nabla} V \Big( \zd{m+1}{} , \zd{m}{}  \Big) \right\rangle 
\end{align*}
holds, 
the discrete gradient method~\eqref{eq_lg_dg} is conservative (resp. dissipative) 
because of the skew-symmetry (resp. negative semidefiniteness) of $ \overline{S}$. 

Note that the discrete gradient (see Definition~\ref{def_dg}) is not unique, 
and several methods of constructing discrete gradients are known. 
For example, the average vector field~\cite{QM2008}
\begin{equation}\label{eq_avf}
\overline{\nabla}_{\mathrm{AVF}} V (z,z') = \int_0^1 \nabla V \left( (1 - \xi) z + \xi z' \right) \rd \xi
\end{equation}
is known to be a discrete gradient.

\subsection{Moore--Penrose inverse matrix}
\label{subsec_pre_mp}

In this section, some basic properties of the Moore--Penrose inverse are summarized. 
As the matrix $A$ that appears in DAE~\eqref{eq_dae} is assumed to be singular, 
we frequently consider its Moore--Penrose inverse in the present paper. 

For a matrix $ A \in \RR^{d \times d} $, the Moore--Penrose inverse $ A^{\dagger} $ of $ A $ is defined as 
the unique matrix satisfying
\begin{align*}
A A^{\dagger} A &= A,&
A^{\dagger} A A^{\dagger} &= A^{\dagger}, &
(A^{\dagger} A)^{\top} &= A^{\dagger} A, &
(A A^{\dagger})^{\top} &= A A^{\dagger}
\end{align*} 
(see \cite{IsraelGreville2003} for details on generalized inverses). 
Note that, though the Moore--Penrose inverse is also defined for non-square matrices, 
we only use the square case in this paper. 

Let $ \Null (A) $ and $ \Range (A) $ denote the null space and the range of $A$. 
Then, the career $ \Car (A) $ is defined as $ \Car (A) = \Null (A)^{\perp} $, 
where $ X^{\perp} $ denotes the orthogonal complement of the linear subspace $ X$. 

For the Moore--Penrose inverse $ A^{\dagger} $, 
the relations $ \Range (A^{\dagger}) = \Car (A) $ and $ \Car (A^{\dagger} ) = \Range (A) $ hold. 
Moreover, $ A^{\dagger} A $ and $ A A^{\dagger} $ are orthogonal projectors on $ \Range (A^{\dagger}) $ and $ \Range (A) $. 
Thus, the relation $ \{ x \mid A x = b \} = \{ A^{\dagger} b + e \mid e \in \Null (A) \} $
holds for any $ b \in \Range (A) $.

\subsection{Basic concepts of DAEs}
\label{subsec_pre_dae}

In this section, several basic concepts of DAEs are reviewed (see, e.g., \cite{Ascher-Petzold1998,Burger-Gerdts2017} for details on DAEs). 
The most general form of autonomous DAEs is $ \psi (z,\dot{z}) =0 $ for some function $\psi$, 
but this can be rewritten in the form~\eqref{eq_dae} by introducing a new variable $ v = \dot{z} $. 
Therefore, we focus on DAEs of the form~\eqref{eq_dae} in this paper. 

The concept of regular DAEs was introduced by Reich~\cite{R1990} as follows.

\begin{definition}{\protect{\cite[Definition~4]{R1990}}}\label{def_regular}
	A DAE~\eqref{eq_dae} is said to be regular 
	if there is a differential submanifold $\CM $ of $ \RR^d $ and a vector field $ v: \CM \to T \CM $ 
	such that a differentiable mapping $ w : [0,T) \to \CM $ is a solution of the vector field $v$ if and only if the mapping $ z : = j \circ w : [0,T) \to \RR^d $ is a solution of the DAE~\eqref{eq_dae}. 
	Here, $ j : \CM \to \RR^d $ is the natural projection. 	
	Then, the manifold $\CM$ is called the configuration space, 
	and the vector field $ v $ is the corresponding vector field of the DAE~\eqref{eq_dae}. 
\end{definition}

Throughout this paper, the DAE is assumed to be regular. 
Furthermore, for simplicity, we assume $ \CM $ can be expressed as $ \CM= \{ z \in \RR^d \mid g_i (z) = 0 \ (i = 1, \dots,k) \} $ by some functions $ g_i \ ( i = 1, \dots, k) $. 

It is important to note that, in numerical computation, 
we often do not use the corresponding vector field $v$ and the configuration space $ \CM$ (Remark~\ref{rem_dae_vfm}). 
For example, the implicit Euler method (the simplest of the usual BDF methods; see \cite{Ascher-Petzold1998} for details) for the DAE~\eqref{eq_dae} can be written as 
\begin{equation}\label{eq_ie}
 A \frac{ \zd{m+1}{} - \zd{m}{} }{\Delta t } = f \big( \zd{m+1}{} \big). 
 \end{equation}
Under a certain assumption, this difference equation provides us with an approximation of the exact solution for the DAE~\eqref{eq_dae}, 
but clearly we do not use either the corresponding vector field~$v$ or the configuration manifold~$\CM$.

Although the well-known DAE form 
\[ \begin{cases}
\dot{y} = \psi(y,z) \\
0 = \phi(y,z)
\end{cases} \]
has the explicit ``algebraic'' constraint $ \phi (y,z) = 0 $, 
the target DAE~\eqref{eq_dae} does not have such an explicit constraint. 
However, the target DAE~\eqref{eq_dae} has this constraint implicitly. 
To see this, we introduce the orthonormal basis $ \{ b_i \}_{i=1}^{\ell} $ of $ (\Range (A) )^{\perp} $ ($ \ell := d - \rank A $), 
and note that $ 0 = \langle b_i , A \dot{z} \rangle = \langle b_i , f(z) \rangle $
holds for any $ i = 1, \dots, \ell $. 
In other words, the DAE~\eqref{eq_dae} has the implicit constraint 
\begin{equation}\label{eq_dae_ic}
B^{\top} f(z) = 0,
\end{equation}
where $ B = ( b_1, \dots, b_{\ell}) \in \RR^{d \times \ell} $. 
Therefore, in general, 
$ \CM \subseteq \CM_B := \{ z \in \RR^d \mid B^{\top} f (z) = 0 \} $ holds. 

When we deal with a higher (differential) index DAE, there are \emph{hidden constraints}. 
Though we do not define the index here (see \cite{Ascher-Petzold1998} for details), 
it should be noted that DAEs have a uniform index-1 if and only if $ \CM = \CM_B$ holds. 


We describe this point using the Moore--Penrose inverse $A^{\dagger} $ of $A$. 
For any solution $ z:[0,T) \to \RR^d $ of the DAE~\eqref{eq_dae}, 
there exists some $ e :[0,T) \to \Null (A) $ such that 
\begin{equation}\label{eq_vf_ker}
\dot{z} (t) = A^{\dagger} f(z(t)) + e (t), 
\end{equation}
because of the property of the Moore--Penrose inverse. 
The configuration space $ \CM $ is actually the maximal manifold for which
$ T_z \CM \cap \{ A^{\dagger} f(z) + e \mid e \in \Null (A)  \} $ is a singleton for any $ z \in \CM $.

\section{Note on linear conserved quantities in DAEs}
\label{sec_linear}

In this section, we consider the discrete conservation of linear conserved quantities in DAEs. 
Though this discussion is not directly related to discrete gradient methods, 
the observations in this section illustrate the complexity of the conservation laws in DAEs. 

For the case of ODEs, 
it is widely known that the discrete preservation of linear conserved quantities is relatively simple
 (see \cite[Section IV.1]{HLW2010}). 
For example, all explicit and implicit Runge--Kutta methods automatically conserve linear conserved quantities~\cite[IV Theorem~1.5]{HLW2010}. 
This significant property holds because the existence of a linear conserved quantity $ V(z) = \langle \gamma , z \rangle $ ($ \gamma \in \RR^d $ is a constant vector) implies that the vector field $f(z)$ is always orthogonal to the constant vector~$ \gamma $. 

Thus, in the case of DAEs, one may feel that the linear conserved quantities should be easily replicated in numerical methods. 
Unfortunately, however, this is not the case. 
To clarify this point, we first define the concept of conserved quantities in DAEs as follows. 
Note that the definition below is quite natural in view of Definition~\ref{def_fi_ode}, 
and is consistent for the concept of ``invariants'' for the vector fields on the manifold (see \cite[Definition~2.29]{Olver1995}).

\begin{definition}[Conserved quantity]\label{def_fi}
	Let $ V : \RR^d \to \RR $ be a $C^r$ map with $ r \ge 1$, $ d > 1 $. 
	Then, $V$ is said to be a \emph{conserved quantity} if $ \frac{\rd}{\rd t} V(z(t)) = 0 $ holds for any solution $z $ of DAE~\eqref{eq_dae}. 
\end{definition}

As all solutions $ z $ of the DAE~\eqref{eq_dae} satisfy $ z(t) \in \CM $ (while $ \RR^d $ is filled with solutions of the ODE~\eqref{eq_ode}), 
the existence of a linear conserved quantity $ V(z) = \langle \gamma , z \rangle $ only implies that 
the vector field $v(z) $ (recall Definition~\ref{def_regular}) is orthogonal to $ \gamma $ for any $ z \in \CM $. 
The following two changes from the ODE case should be emphasized: 
\begin{enumerate}
\item The orthogonality is only given in terms of the corresponding vector field $v$ (see \cite[Theorem~2.74]{Olver1995}), which is not explicitly given in DAE cases (recall that $v$ is not generally used in numerical methods for DAEs). 
\item The orthogonality is only satisfied on the configuration manifold $\CM$. 
\end{enumerate}

The first point makes it difficult to derive some necessary and sufficient condition for the linear conservation law by means of $f$ (and $A$). 
Moreover, when the numerical solution $ \zd{m}{} $ is outside of the configuration manifold $\CM$, 
the second point becomes troublesome. 
The latter is not actually a major issue in the index-1 case (see Section~\ref{subsec_pre_dae}), 
but the former is more delicate. 

Still, using~\eqref{eq_vf_ker}, we see that
\begin{equation}\label{eq_chain_linear}
 \frac{\rd}{\rd t} V(z(t)) = \langle \gamma, \dot{z} (t) \rangle = \langle \gamma , A^{\dagger} f(z(t)) + e(t) \rangle = \langle \gamma , A^{\dagger} f(z) \rangle + \langle \gamma , e(t) \rangle. 
 \end{equation}
Therefore, if the linear conserved quantity $V$ satisfies $ \gamma \in \Car (A) = (\Null (A) )^{\perp} $, 
we can overcome the former difficulty; 
the conservation of $V$ implies $ \langle \gamma , A^{\dagger} f(z) \rangle = 0 $ for any $ z \in \CM $. 
Using the fact that the implicit Euler method~\eqref{eq_ie} satisfies $ \zd{m}{} \in \CM $ for uniform index-1 DAEs, we obtain the following proposition. 

\begin{proposition}\label{prop_lc_dae}
	Let $ \zd{0}{} \in \CM $ be the initial condition and $ \zd{m}{} $ be the solution of the implicit Euler method~\eqref{eq_ie} ($ m = 1,2, \dots $). 
	Suppose that $ V(z) = \langle \gamma , z \rangle $ is a conserved quantity of the DAE~\eqref{eq_dae} satisfying $ \gamma \in \Car (A) $, and that the DAE~\eqref{eq_dae} has a uniform index-1. 
	Then, for any $ m = 0, 1, \dots $, $ V(\zd{m+1}{} ) = V(\zd{m}{}) $ holds. 
\end{proposition}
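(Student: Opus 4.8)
The plan is to turn the discrete conservation statement into the pointwise orthogonality criterion recalled just before the statement --- namely that, because $\gamma \in \Car(A)$ and $V$ is conserved, $\langle \gamma, A^\dagger f(z)\rangle = 0$ holds for every $z \in \CM$ --- and then to verify that the implicit Euler iterates provide the two ingredients this needs: the increment along the scheme is expressible through $A^\dagger f(\zd{m+1}{})$, and $\zd{m+1}{}$ actually lies on $\CM$.

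First I would rewrite the increment of $V$. Since $A^\dagger A$ is the orthogonal projector onto $\Range(A^\dagger) = \Car(A)$ and $\gamma \in \Car(A)$, we have $\gamma = A^\dagger A \gamma$, and using the symmetry of $A^\dagger A$,
\[
V\big(\zd{m+1}{}\big) - V\big(\zd{m}{}\big) = \big\langle \gamma, \zd{m+1}{} - \zd{m}{} \big\rangle = \big\langle \gamma, A^\dagger A \big(\zd{m+1}{} - \zd{m}{}\big)\big\rangle .
\]
By the implicit Euler equation \eqref{eq_ie}, $A\big(\zd{m+1}{} - \zd{m}{}\big) = \Delta t\, f\big(\zd{m+1}{}\big)$, so the right-hand side equals $\Delta t\,\langle \gamma, A^\dagger f(\zd{m+1}{})\rangle$. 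Thus everything reduces to showing $\langle \gamma, A^\dagger f(\zd{m+1}{})\rangle = 0$.

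Next I would locate $\zd{m+1}{}$ on $\CM$. The scheme \eqref{eq_ie} gives $f(\zd{m+1}{}) = A\big[(\zd{m+1}{} - \zd{m}{})/\Delta t\big] \in \Range(A)$, i.e.\ $B^\top f(\zd{m+1}{}) = 0$, so $\zd{m+1}{} \in \CM_B$; since the DAE has uniform index-1, $\CM = \CM_B$ (Section~\ref{subsec_pre_dae}), hence $\zd{m+1}{} \in \CM$. The hypothesis $\zd{0}{} \in \CM$ enters only to make the first step well posed and to start the induction. Now the pointwise criterion $\langle \gamma, A^\dagger f(z)\rangle = 0$ for $z \in \CM$ --- which is read off from \eqref{eq_chain_linear} by noting that $\langle \gamma, e(t)\rangle = 0$ whenever $\gamma \in \Car(A)$ and $e(t) \in \Null(A)$, together with the fact that regularity puts a solution through every point of $\CM$ --- applies at $z = \zd{m+1}{}$ and kills the remaining term, giving $V(\zd{m+1}{}) = V(\zd{m}{})$ for all $m$.

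I do not expect a serious obstacle here; once the ingredients of Section~\ref{sec_linear} are in place this is essentially bookkeeping. The one point that needs a little care is the claim $\zd{m+1}{} \in \CM$: it is tempting to attribute it to $\zd{0}{} \in \CM$, but it in fact follows directly from the structure of \eqref{eq_ie} combined with the index-1 identity $\CM = \CM_B$, and it is exactly here that the \emph{uniform index-1} hypothesis is indispensable --- in higher-index cases there are hidden constraints not captured by $\CM_B$, the iterates need not lie on $\CM$, and the argument collapses.
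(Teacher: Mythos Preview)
Your argument is correct and matches the paper's proof in all essentials: both reduce $V(\zd{m+1}{})-V(\zd{m}{})$ to $\Delta t\,\langle\gamma,A^{\dagger}f(\zd{m+1}{})\rangle$, invoke $\zd{m+1}{}\in\CM$ via the uniform index-1 hypothesis, and finish with the pointwise orthogonality criterion from~\eqref{eq_chain_linear}. The only cosmetic difference is that the paper writes $(\zd{m+1}{}-\zd{m}{})/\Delta t=A^{\dagger}f(\zd{m+1}{})+e^{(m+1)}$ with $e^{(m+1)}\in\Null(A)$ and kills the extra term by $\gamma\in\Car(A)=(\Null(A))^{\perp}$, whereas you achieve the same cancellation by inserting the projector $A^{\dagger}A$ on the $\gamma$ side; your explicit verification that $\zd{m+1}{}\in\CM_B=\CM$ is in fact slightly more detailed than the paper, which states this as a known fact just before the proposition.
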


\begin{proof}
	Similar to~\eqref{eq_vf_ker}, for any $ m = 0,1, \dots $, 
	there exists some $ e^{(m+1)} \in \Null(A) $ such that 
	\[ \frac{\zd{m+1}{} - \zd{m}{} }{\Delta t } = A^{\dagger} f(\zd{m+1}{} ) + e^{(m+1)} \]
	holds. 
	Therefore, we see that 
	\begin{align*}
	\frac{V(\zd{m+1}{}) - V(\zd{m}{})}{\Delta t} 
	&= \left\langle \gamma , \frac{\zd{m+1}{} - \zd{m}{}}{\Delta t} \right\rangle
	= \left\langle \gamma , A^{\dagger} f( \zd{m+1}{} ) \right\rangle + \left\langle \gamma, e^{(m+1)} \right\rangle,
	\end{align*}
	in which the right-hand side vanishes because $ \zd{m+1}{} \in \CM $, $ e^{(m+1)} \in \Null (A)$, and $ \gamma \in \Car (A) $. 
\end{proof}

A similar proposition holds for a subclass of implicit Runge--Kutta methods (the proof is similar). 
These discrete conservation laws are just examples, 
but the fact implies that the linear conserved quantity is not difficult to replicate in numerical methods when $ \gamma \in \Car (A)  $. 

Unfortunately, however, this approach truly relies on the condition $ \gamma \in \Car (A) $. 
As shown in~\eqref{eq_chain_linear}, even for the continuous case, 
the proof of the conservation of such a linear conserved quantity depends on $ e(t) $, which is equivalent to the vector field $v$ (when we obtain $e(t)$, the vector field $v$ can be constructed by $ A^{\dagger} f(z) + e $).
Actually, the nonlinear DAE (obtained by a very coarse spatial discretization of the modified Hunter--Saxton equation; see~\cite{S2018+})
\begin{equation}\label{eq_smHS}
\begin{pmatrix} -1 & 1 & 0 \\ 0 & -1 & 1 \\ 1 & 0 & -1  \end{pmatrix} \dot{z} = \frac{1}{2} \begin{pmatrix} 1 & 1 & 0 \\ 0 & 1 & 1 \\ 1 & 0 & 1  \end{pmatrix} \begin{pmatrix} z_1 ( 1 + 2 z_1 - z_2 - z_3 ) \\ z_2 (1 + 2 z_2 - z_1 - z_3) \\ z_3 ( 1 + 2 z_3 - z_1 - z_2)  \end{pmatrix} - \frac{1}{2} \begin{pmatrix} (z_2 - z_1)^2 \\ (z_3- z_2)^2 \\ (z_1 - z_3)^2  \end{pmatrix}
\end{equation}
illustrates the difficulty of the numerical preservation of such a linear conserved quantity: 
\begin{itemize}
	\item it has a linear conserved quantity $ V (z) =  \langle \mathbf{1} , z \rangle $ with $ \mathbf{1} := (1,1,1)^{\top}\notin \Car (A) $; 
	\item it has a uniform index-1; but 
	\item the implicit Euler method~\eqref{eq_ie} fails to conserve $V$. 
\end{itemize}
Though we omit the numerical simulation here, 
this fact illustrates the difficulty involved with the conservative numerical integration of DAEs
(the reason for this phenomenon is described in Example~\ref{ex_smHS}).

\section{Proper functions}
\label{sec_proper}

As shown in the previous section, 
conserved quantities of DAEs can be separated into two classes, 
i.e., relatively easy ones ($ \gamma \in \Car(A)$ in the linear case) and difficult ones. 

Therefore, before we deal with conservation/dissipation laws in DAEs, 
we extend the former class to the general nonlinear case. 
To this end, we introduce the concept of ``properness'' for DAEs in Section~\ref{subsec_def_proper}, 
and present some examples in Section~\ref{subsec_example_proper}. 

\subsection{Concept of proper functions and its advantages}
\label{subsec_def_proper}

\begin{definition}[Proper functions]
	Let $ V : \RR^d \to \RR $ be a differentiable function. 
	Then, $ V $ is said to be \emph{proper} for the DAE~\eqref{eq_dae}
	if $ \nabla V(z) \in \Car (A) $ holds for any $ z \in \CM $. 	  
\end{definition}

Though the above definition may appear unnatural, 
the concept of proper functions has the following desirable properties: 
\begin{itemize}
	\item[(a)] it is a natural extension of the linear case: \\
	in the linear case ($V$ is a linear function), $V$ is proper if and only if $ \gamma \in \Car(A) $, i.e., naturally includes the case in Proposition~\ref{prop_lc_dae}; 
	\item[(b)] it is a natural extension of the ODE case: \\
	ODEs correspond to the case where $A$ is nonsingular, and so all functions are proper; 
	\item[(c)] it has simple criteria for conservation/dissipation:\\
	using~\eqref{eq_vf_ker}, for some proper function $V$ and a solution $z$ of the DAE~\eqref{eq_dae}, we see that
	\begin{equation}\label{eq_chain_proper}
	\frac{\rd}{\rd t} V(z(t)) = \left\langle \nabla V(z) , \dot{z} (t) \right\rangle = \left\langle \nabla V(z) , A^{\dagger} f(z) + e(t) \right\rangle =  \left\langle \nabla V(z) , A^{\dagger} f(z) \right\rangle,
	\end{equation}
	which implies that the conservation/dissipation of $V$ can be characterized by the value of the right-hand side (note that the right-hand side does not depend on $v$);
	\item[(d)] it forms a sufficiently large subclass of functions: 
	\begin{itemize}
		\item a lot of known conserved quantities turn out to be proper, as shown in Section~\ref{subsec_example_proper};
		\item every function can be transformed to a proper function without changing its values on $\CM$ (Proposition~\ref{prop_proper}). 
	\end{itemize}
\end{itemize}

The final point is summarized in the following proposition. 
As all solutions of the DAE~\eqref{eq_dae} belong to the configuration manifold $\CM$, 
the value of the function $V$ outside $\CM$ does not have any impact on the conservation/dissipation laws. 
Therefore, the following proposition shows that the class of proper functions forms 
a sufficiently large subclass. 

\begin{proposition}\label{prop_proper}
	Let $ \tilde{V} : \RR^d \to \RR $ be a $ C^{r+1} $ function with $ r \ge 1 $. 
	Suppose that $ g_i : \RR^d \to \RR $ is a $ C^{r+1} $ function for each $ i = 1, \dots, p $. 
	Then, there exists a $ C^{r} $ proper function $ V: \RR^d \to \RR $ such that $ V(z) = \tilde{V} (z) $ holds for any $ z \in \CM $. 
\end{proposition}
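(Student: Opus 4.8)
The plan is to correct $\tilde{V}$ by a function vanishing on $\CM$, chosen so that the correction cancels exactly the component of $\nabla\tilde{V}$ lying outside $\Car(A)$. Write $g=(g_1,\dots,g_p)^\top$ and $G(z)=(\nabla g_1(z)\,|\cdots|\,\nabla g_p(z))\in\RR^{d\times p}$; let $P:=A^\dagger A$ be the orthogonal projector onto $\Car(A)=\Range(A^\dagger)$, so that $I-P$ is the orthogonal projector onto $\Null(A)$; and fix an orthonormal basis $b_1,\dots,b_\ell$ of $\Null(A)$ (so $\ell=d-\rank A$), collected into $B:=(b_1\,|\cdots|\,b_\ell)\in\RR^{d\times\ell}$, which satisfies $BB^\top=I-P$ and $B^\top B=I_\ell$. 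I would look for $V$ in the form $V(z)=\tilde{V}(z)-\rho(z)\langle\lambda(z),g(z)\rangle$, where $\lambda\colon\RR^d\to\RR^p$ is of class $C^r$ near $\CM$ and $\rho$ is a $C^\infty$ cutoff equal to $1$ on a neighborhood of the closed set $\CM=\{g_i=0\}$. Because $g=0$, $\rho=1$ and $\nabla\rho=0$ on $\CM$, this automatically gives $V=\tilde{V}$ and $\nabla V=\nabla\tilde{V}-G\lambda$ on $\CM$, so properness of $V$ reduces to solving $(I-P)G(z)\lambda(z)=(I-P)\nabla\tilde{V}(z)$ for every $z\in\CM$.

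Next I would check that this equation is solvable at each point. By the maximality characterization of $\CM$ recalled in Section~\ref{subsec_pre_dae}, for $z\in\CM$ the set $T_z\CM\cap(A^\dagger f(z)+\Null(A))$ is a singleton; since the second set is a coset of $\Null(A)$, this forces $T_z\CM\cap\Null(A)=\{0\}$, which by orthogonality is equivalent to $N_z\CM+\Car(A)=\RR^d$, where $N_z\CM:=(T_z\CM)^\perp$. If the given functions form a regular description of $\CM$, i.e.\ $\Range(G(z))=N_z\CM$ for $z\in\CM$ (this always holds locally; the general case is recovered at the end by a partition of unity), then $\Range\big((I-P)G(z)\big)=(I-P)N_z\CM=(I-P)(N_z\CM+\Car(A))=\Null(A)$ for every $z\in\CM$. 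In particular $(I-P)\nabla\tilde{V}(z)\in\Null(A)$ lies in this range, so a solution $\lambda(z)$ exists; what remains is to choose it $C^r$-smoothly in $z$.

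For the smooth selection, put $N(z):=B^\top G(z)\,G(z)^\top B\in\RR^{\ell\times\ell}$, which is symmetric, positive semidefinite, and $C^r$ in $z$ (recall $g_i\in C^{r+1}$). Since $(I-P)G(z)=BB^\top G(z)$ and $B$ has full column rank, $\Range\big((I-P)G(z)\big)=\Null(A)$ holds if and only if $B^\top G(z)$ has rank $\ell$, equivalently $N(z)$ is invertible; hence $N$ is invertible on an open neighborhood $\mathcal U$ of $\CM$, and $\lambda(z):=G(z)^\top B\,N(z)^{-1}B^\top\nabla\tilde{V}(z)$ is a well-defined $C^r$ map on $\mathcal U$. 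Using $BB^\top=I-P$ and $B^\top B=I_\ell$ one gets $(I-P)G(z)\lambda(z)=BB^\top G(z)G(z)^\top B\,N(z)^{-1}B^\top\nabla\tilde{V}(z)=B\,N(z)N(z)^{-1}B^\top\nabla\tilde{V}(z)=(I-P)\nabla\tilde{V}(z)$ on $\mathcal U$, in particular on $\CM$. Finally I choose $\rho\in C^\infty(\RR^d)$ with $\rho\equiv1$ near $\CM$ and $\mathrm{supp}\,\rho\subseteq\mathcal U$ (possible since $\CM$ is closed and contained in the open set $\mathcal U$) and extend $\rho\lambda$ by $0$; then $V:=\tilde{V}-\rho\langle\lambda,g\rangle$ is $C^r$ on $\RR^d$, equals $\tilde{V}$ on $\CM$, and, for $z\in\CM$, $(I-P)\nabla V(z)=(I-P)\nabla\tilde{V}(z)-(I-P)G(z)\lambda(z)=0$, i.e.\ $\nabla V(z)\in\Car(A)$, so $V$ is proper. (In the general case one performs this construction on small neighborhoods where a regular local description is available, together with $\tilde{V}$ itself on $\RR^d\setminus\CM$, and glues by a partition of unity $\{\chi_k\}$; since every local piece $V_k$ equals $\tilde{V}$ on $\CM$ one has $\sum_k V_k\nabla\chi_k=\tilde{V}\,\nabla(\sum_k\chi_k)=0$ on $\CM$, while $\sum_k\chi_k\nabla V_k\in\Car(A)$ there, so the glued function is again proper and agrees with $\tilde{V}$ on $\CM$.)

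I expect the main obstacle to be precisely this $C^r$ choice of the multipliers $\lambda_i$: the pointwise linear system for $\lambda(z)$ has a non-unique solution whose naive (pseudoinverse) selection need not even be continuous, and the reason it can nonetheless be taken of class $C^r$ here is the constancy of $\Range\big((I-P)G(z)\big)=\Null(A)$ along $\CM$ — which itself rests on the DAE-regularity identity $T_z\CM\cap\Null(A)=\{0\}$. The remaining ingredients (local regularity of the defining functions, the cutoff and the extension, and the partition-of-unity reduction from the regular to the general case) are routine.
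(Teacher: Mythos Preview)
Your approach is essentially the same as the paper's: both modify $\tilde{V}$ by a combination $\sum c_i(z) g_i(z)$, reduce properness on $\CM$ to the linear system $B^\top G(z)\,c(z) = -B^\top \nabla\tilde{V}(z)$ (your $(I-P)G\lambda=(I-P)\nabla\tilde{V}$ is the same equation), establish that $W(z)=B^\top G(z)$ has full row rank from the DAE-regularity condition $T_z\CM\cap\Null(A)=\{0\}$, and then take the explicit least-norm solution $c = -W^\top(WW^\top)^{-1}B^\top\nabla\tilde{V}$, which is exactly your $-\lambda$. Your version is more careful about the extension from a neighborhood of $\CM$ to all of $\RR^d$ (the cutoff $\rho$ and the partition-of-unity reduction to a regular local description), steps the paper simply omits.
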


\begin{proof}
	Note that, for any functions $ c_i :\RR^d \to \RR \ ( i = 1 ,\dots , k )$, the function $V$ defined by 
	$ V (z) := \tilde{V} (z) + \sum_{i=1}^k c_i (z) g_i (z) $ satisfies $ V = \tilde{V} $ on $ \CM $. 
	Moreover, when each $ c_i $ is differentiable, for any $ z \in \CM $, the gradient of $V $ can be expressed as
	\[ \nabla V (z) = \nabla \tilde{V} (z) + \sum_{i=1}^k ( c_i (z) \nabla g_i (z) + g_i (z) \nabla c_i (z) ) = \nabla \tilde{V} (z) + \sum_{i=1}^k  c_i (z) \nabla g_i (z). \]
	Hence, it is sufficient to show that there exists an appropriate definition of $ c(z) = (c_1 (z) ,\dots , c_k (z))^{\top} $ such that $ \nabla V (z) \in \Car (A) $ holds for any $ z \in \CM $. 
	In other words, we should find $ c (z) $ satisfying the linear equation 
	\begin{equation}\label{eq_coeff}
	W(z) c (z) = - \begin{pmatrix} e_1 & \cdots & e_{\ell} \end{pmatrix}^{\top} \nabla \tilde{V} (z), 
	\end{equation}
	where $ W(z) = \begin{pmatrix} e_1 & \cdots & e_{\ell} \end{pmatrix}^{\top} \begin{pmatrix} \nabla g_1 (z) & \cdots & \nabla g_k (z) \end{pmatrix} $, and $\{ e_i \}_{i=1}^{\ell} $ is an orthonormal basis of $ \Null (A) $. 
	
	To ensure the existence of an appropriate $ C^r $ map $ c $, 
	we show that $ W (z) $ has full row rank. 
	Note that the tangent space $ T_z \CM $ of $ \CM $ at $ z $ can be expressed as
	$ T_z \CM = \{ w \in \RR^d \mid \langle \nabla g_i (z), w \rangle = 0 \ ( i = 1, \dots , k) \} $. 
	As the DAE~\eqref{eq_dae} is assumed to be regular, 
	$ T_z \CM \cap \{ w \in \RR^d \mid A w = f(z) \} $ is a singleton for any $ z \in \CM $. 
	Summing up, for each $ z \in \CM$, the linear equation
	\[  \begin{pmatrix}
	A^{\top} & 
	\nabla g_1 (z) &
	\cdots & 
	\nabla g_k (z)
	\end{pmatrix}^{\top} v = \begin{pmatrix} f(z)^{\top} & 0 & \cdots & 0 \end{pmatrix}^{\top}  \]
	has exactly one solution $ v $. 
	Therefore, the matrix $ \begin{pmatrix} A^{\top} & \nabla g_1 (z) & \cdots & \nabla g_k (z) \end{pmatrix} $ has full row rank. 
	In addition, we see that
	\[ \begin{pmatrix} e_1 & \cdots & e_{\ell} \end{pmatrix}^{\top} \begin{pmatrix}
	A^{\top} & 
	\nabla g_1 (z) &
	\cdots & 
	\nabla g_k (z)
	\end{pmatrix} = \begin{pmatrix} O_{\ell,d} & - L (z) \end{pmatrix}, \]
	where $ O_{\ell, d} \in \RR^{\ell \times d} $ is the zero matrix. 
	As the left-hand side has full row rank, $ \rank W (z) = \ell $ holds. 
	
	Because $ W (z) $ has full row rank, 
	\[ c(z) = - ( W(z) )^{\top} \left( W (z) ( W(z))^{\top} \right)^{-1} \begin{pmatrix} e_1 & \cdots & e_{\ell} \end{pmatrix}^{\top} \nabla \tilde{V} (z) \]
	is a solution of~\eqref{eq_coeff}, which shows the proposition. 
\end{proof}

\begin{remark}
	Note that, for each function $ \tilde{V} $, 
	the proper function $ V : \RR^d \to \RR $ satisfying $ V(z) = \tilde{V} (z) \ (z \in \CM) $ is not unique. 
	For instance, as we will see in Example~\ref{ex_hs_con}, 
	the Hamiltonian system with holonomic constraints includes a proper conserved quantity, 
	and the so-called augmented Hamiltonian is also a proper conserved quantity whose value coincides with that of the original Hamiltonian on $ \CM$. 
	
	In this sense, the proper function is not a ``normal form'' of the class of functions having the same value on $ \CM $. 
	However, using this diversity in an appropriate manner, 
	one may achieve a simple linear gradient structure, resulting in better numerical methods. 
	In fact, as shown in Example~\ref{ex_hs_lg}, 
	the linear gradient structure can dramatically depend on the choice of proper functions. 
\end{remark}

To partly confirm the meaning of properness, 
we consider index-1 DAEs. 
In this case, as shown in the lemma below, 
none of the constraints is proper, 
and any nontrivial combination of a proper function and constraints is not proper. 

\begin{lemma}\label{lem_prop_constraint}
	Suppose that the DAE~\eqref{eq_dae} has a uniform index-1. 
	Then, $ \nabla g_i (z) \notin \Car (A) $ holds for each $ i = 1 ,\dots ,\ell $ and any $ z \in \CM $, 
	i.e., the constraints $g_i $ are not proper. 
\end{lemma}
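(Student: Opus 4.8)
The plan is to reduce the statement to the non-singularity, on $\CM$, of the $\ell\times\ell$ matrix that governs index-$1$ DAEs. For a uniform index-$1$ DAE we have $\CM=\CM_B=\{z\in\RR^d\mid B^{\top}f(z)=0\}$, so the $\ell$ components of the implicit constraint~\eqref{eq_dae_ic} already cut out $\CM$; accordingly I would take $g_i(z)=\langle b_i,f(z)\rangle$ for $i=1,\dots,\ell$, for which $\nabla g_i(z)=Df(z)^{\top}b_i$. Let $\{e_j\}_{j=1}^{\ell}$ be the orthonormal basis of $\Null(A)$ used in the proof of Proposition~\ref{prop_proper}, and set $M(z):=B^{\top}Df(z)\begin{pmatrix}e_1 & \cdots & e_{\ell}\end{pmatrix}\in\RR^{\ell\times\ell}$, so that $M(z)_{ij}=\langle b_i,Df(z)e_j\rangle=\langle\nabla g_i(z),e_j\rangle$; this is the transpose of the matrix $W(z)$ from the proof of Proposition~\ref{prop_proper}, now square because we are using $k=\ell$ constraints. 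The one analytic input I need is the standard characterisation (see~\cite{Ascher-Petzold1998}): differentiating $B^{\top}f(z)=0$ along a solution and substituting $\dot z=A^{\dagger}f(z)+e$ with $e\in\Null(A)$ yields a linear system $M(z)\,\alpha=-B^{\top}Df(z)A^{\dagger}f(z)$ for the coordinate vector $\alpha$ of $e$, and the DAE has uniform index $1$ precisely when this system is uniquely solvable for every $z\in\CM$, i.e., when $M(z)$ is invertible on $\CM$ (this is another way of phrasing $\CM=\CM_B$).

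Granting this, the lemma follows in two lines. Since $\{e_j\}$ spans $\Null(A)$ and $\Car(A)=\Null(A)^{\perp}$, the condition $\nabla g_i(z)\in\Car(A)$ is equivalent to $\langle\nabla g_i(z),e_j\rangle=0$ for all $j$, i.e., to the $i$-th row of $M(z)$ being zero. Hence if $\nabla g_{i_0}(z_0)\in\Car(A)$ for some $i_0$ and some $z_0\in\CM$, then $M(z_0)$ has a vanishing row and is singular, contradicting index $1$; thus $\nabla g_i(z)\notin\Car(A)$ for every $i$ and every $z\in\CM$, and no $g_i$ is proper. The same bookkeeping also gives the companion assertion quoted in the text: for $V=V_0+\sum_{i=1}^{\ell}c_i g_i$ with $V_0$ proper, the orthogonal projection of $\nabla V(z)$ onto $\Null(A)$ equals, on $\CM$, the vector $\sum_{j=1}^{\ell}(M(z)^{\top}c(z))_j e_j$, which vanishes only when $c(z)=0$, so $V$ fails to be proper unless the combination is trivial on $\CM$. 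Equivalently, one may phrase everything through Proposition~\ref{prop_proper}: there $W(z)$ is shown to have rank $\ell$, so here it is a nonsingular $\ell\times\ell$ matrix with no zero column, while its $i$-th column is $\begin{pmatrix}e_1 & \cdots & e_{\ell}\end{pmatrix}^{\top}\nabla g_i(z)$, which vanishes precisely when $g_i$ is proper.

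I expect the only genuine obstacle to be making the index-$1$ characterisation fully rigorous within the paper's framework, i.e., deducing the pointwise non-singularity of $M(z)$ directly from ``$\CM=\CM_B$'' together with regularity rather than merely citing it. A clean route is: regularity forces $T_z\CM\cap\Null(A)=\{0\}$ for every $z\in\CM$, since otherwise the singleton condition on $T_z\CM\cap\{w\mid Aw=f(z)\}=T_z\CM\cap(A^{\dagger}f(z)+\Null(A))$ would fail; because $\CM=\CM_B$ and---this is the delicate point, and exactly what ``uniform index $1$'' encodes---the map $z\mapsto B^{\top}f(z)$ is a submersion along $\CM$, one gets $T_z\CM=\ker(B^{\top}Df(z))$; then the linear map $e\mapsto B^{\top}Df(z)e$ from $\Null(A)$ to $\RR^{\ell}$ has trivial kernel and, being a map between spaces of equal dimension $\ell$, is an isomorphism, which is precisely the invertibility of $M(z)$. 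Once this submersion/dimension-count step is in place, the remainder is the short computation above.
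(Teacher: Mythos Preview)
Your argument is correct and is essentially the paper's approach: the paper simply cites from the proof of Proposition~\ref{prop_proper} that the matrix $(A^{\top},\nabla g_1(z),\dots,\nabla g_{\ell}(z))$ has full row rank for every $z\in\CM$ (equivalently, your $W(z)=M(z)^{\top}$ is a nonsingular $\ell\times\ell$ matrix), and then notes that $\Range(A^{\top})=\Car(A)$, so no $\nabla g_i(z)$ can lie in $\Car(A)$ without dropping the rank. Your closing remark---``one may phrase everything through Proposition~\ref{prop_proper}''---is precisely the paper's two-line proof; the preceding derivation of the invertibility of $M(z)$ from the index-$1$ condition is just an unpacked version of what Proposition~\ref{prop_proper} already established via regularity.
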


\begin{proof}
	As stated in the proof of Proposition~\ref{prop_proper}, 
	since the DAE~\eqref{eq_dae} is assumed to be regular, 
	the matrix $ (A^{\top} , \nabla g_1 (z) , \dots , \nabla g_{\ell} (z) ) $ has full row rank for any $ z \in \CM $. 
	This fact implies the lemma, because the span of the column vectors of $ A^{\top} $ coincides with $ \Car (A) $. 
\end{proof}

\begin{remark}\label{rem_hi_proper}
	When we deal with a higher index DAE, 
	there are several hidden constraints, i.e., $ \CM $ is a proper subset of $ \CM_B $. 
	In this case, some constraints can be proper. 
	This means that, in considering the conservation/dissipation laws, 
	the set of proper functions is an unnecessarily large subset of functions 
	(these constraints are regarded as proper conserved/dissipated quantities). 
	However, as a mathematical definition of the ``desired conserved/dissipated quantities'' seems to be difficult, and 
	one can typically distinguish the appropriate conserved/dissipated quantities from constraints in physical problems, 
	we believe the concept of properness is a good candidate. 
	In this sense, though the later discussion on conservation/dissipation law sometimes (unintentionally) includes the constraints, we are not especially concerned by this. 
\end{remark}

\subsection{Examples of proper functions}
\label{subsec_example_proper}

Through the following examples, 
we show that the known conserved quantity is often proper. 
Prior to physical examples, we use the artificial example in~\eqref{eq_smHS}. 

\begin{example}\label{ex_smHS}
	The nonlinear DAE~\eqref{eq_smHS} has three conserved quantities
	\begin{align*}
	H(z) &= \frac{1}{2} \left( (z_2 - z_1)^2 + (z_3 - z_2)^2 + (z_1 - z_3)^2 \right), \\
	V(z) &= z_1 + z_2 + z_3, \\
	g(z) &= z_1 + z_2 + z_3 + \frac{1}{2} \left( (z_2 - z_1)^2 + (z_3 - z_2)^2 + (z_1 - z_3)^2 \right). 
	\end{align*}
	Because $ g(z) = H(z) + V(z) $, these quantities are functionally dependent. 
	As the configuration manifold $ \CM = \{ z \in \RR^3 \mid g(z) = 0 \} $, 
	$g$ turns out to be a constraint of the DAE~\eqref{eq_smHS}. 
	Therefore, Lemma~\ref{lem_prop_constraint} implies that $g$ is not proper. 
	Moreover, as mentioned in Section~\ref{sec_linear}, $V$ is not proper. 
	
	However, we see that 
	\[ \nabla H(z) = \begin{pmatrix} 2 & -1 & -1 \\ -1 & 2 & -1 \\ -1 & -1 & 2 \end{pmatrix} \begin{pmatrix} z_1 \\ z_2 \\ z_3 \end{pmatrix} \in \Car \left( A \right), \]
	which indicates that $H$ is proper. 
	
	In view of this classification, the fact that ``the implicit Euler method fails to conserve $V$'' can be explained as follows. 
	As $ H = - V $ holds on $\CM$, the linear conserved quantity $ V $ is ``equivalent'' to the quadratic conserved quantity $H$. 
	In addition, the implicit Euler method generally does not replicate the quadratic conserved quantity. 
	Therefore, the implicit Euler method fails to conserve $V$, which is just a shadow of the proper quadratic conserved quantity~$ H $. 
\end{example}

\begin{example}\label{ex_hs_con}
	In this example, the Hamiltonian system with holonomic constraints
	\begin{equation}\label{eq_hs}
	\begin{cases}
	\dot{q} = \frac{\partial H}{\partial p},\\
	\dot{p} = -\frac{\partial H}{\partial q} - (J g (q) )^{\top} \lambda,\\
	0 = g (q),
	\end{cases}
	\hspace{-7pt}
	\iff 
	\begin{pmatrix}
	I_n &  &  \\ 
	& I_n &  \\ 
	&  & O_h
	\end{pmatrix} 
	\begin{pmatrix} \dot{q} \\ \dot{p} \\ \dot{\lambda} \end{pmatrix}
	=
	\begin{pmatrix}
	\frac{\partial H}{\partial p}\\
	-\frac{\partial H}{\partial q} - (J g (q) )^{\top} \lambda\\
	g(q)
	\end{pmatrix}
	\end{equation}
	is considered. 
	Here, $ q : [0,T) \to \RR^n $ and $ p : [0,T) \to \RR^n $ are dependent variables, 
	$ H : \RR^n \times \RR^n \to \RR $ is the Hamiltonian, $ g : \RR^n \to \RR^h $ denotes the holonomic constraints ($ J g :\RR^d \to \RR^{h \times n}$ is the Jacobian matrix of $g$), and 
	$ \lambda: [0,T) \to \RR^h $ gives the corresponding Lagrange multipliers.
	Then, $ H $ is a conserved quantity: 
	\begin{align*}
	\frac{\rd}{\rd t} H(q(t),p(t)) 
	&= \left\langle \frac{\partial H}{\partial q} , \dot{q} \right\rangle + \left\langle \frac{\partial H}{\partial p} , \dot{p} \right\rangle \\
	&= \left\langle \frac{\partial H}{\partial q} , \frac{\partial H}{\partial p} \right\rangle + \left\langle \frac{\partial H}{\partial p} , -\frac{\partial H}{\partial q} - (J g (q) )^{\top} \lambda  \right\rangle \\
	&= - \left\langle \dot{q} ,  (J g (q) )^{\top} \lambda  \right\rangle 
	= - \left\langle (J g (q) ) \dot{q} , \lambda  \right\rangle \\
	&= - \left\langle \frac{\rd}{\rd t} g(q(t)) , \lambda \right\rangle = 0. 
	\end{align*}
	Moreover, 
	as the gradient $ \nabla H (q,p,\lambda) $ can be expressed by $ ( \partial H/ \partial q , \partial H / \partial p, 0 )^{\top} $, 
	the Hamiltonian $ H $ is proper. 
	
	On the other hand, the augmented Hamiltonian 
	\begin{equation}\label{eq_aug_hamiltonian}
	V(q,p,\lambda) = H(q,p) + \langle \lambda , g(q) \rangle
	\end{equation}
	is often used (see, e.g., \cite{G1999}). 
	Since $ V(q,p,\lambda) = H (q,p) $ holds for any $ (q,p,\lambda) \in \CM $, 
	the augmented Hamiltonian $ V $ is also a conserved quantity. 
	Furthermore, 
	because the gradient $ \nabla V(q,p,\lambda) $ can be expressed by $ ( \partial H/ \partial q + (J g (q))^{\top} \lambda , \partial H / \partial p , g(q) ) $, 
	the augmented Hamiltonian $ V $ is also a proper conserved quantity 
	(note that $ g(q) = 0 $ holds on~$ \CM $). 
\end{example}

\begin{example}\label{ex_ad_con}
	Furihata--Sato--Matsuo~\cite{FSM2016} considered a numerical method for evolutionary differential equations of the form
	\begin{equation}\label{eq_utx}
	u_{tx} = \frac{\delta \mathcal{H}}{\delta u},
	\end{equation}
	where subscripts denote partial derivatives, 
	$ u : [0,T) \times \Sb \to \RR $ is a dependent variable ($ \Sb = \RR / L \mathbb{Z} $), 
	$ t $ and $x $ are temporal and spatial independent variables, 
	and $ \delta \mathcal{H} / \delta u $ is the variational derivative of the functional $ \mathcal{H} $. 
	Note that $ \mathcal{H} $ is a conserved quantity: 
	\begin{equation*}
	\frac{\rd}{\rd t} \mathcal{H} (u(t)) =  \left\langle \frac{\delta \mathcal{H}}{\delta u } , u_t \right\rangle = \left\langle u_{tx} ,  u_t \right\rangle = 0, 
	\end{equation*}
	where $ \langle \cdot , \cdot \rangle $ is the standard $L^2$ inner product 
	(this is a slight abuse of notation, but it causes no confusion). 

	Though Furihata--Sato--Matsuo~\cite{FSM2016} dealt with full discretization, 
	we consider the corresponding spatial discretization. 
	By introducing the discrete symbol $ u_i (t) \approx u ( t, i \Delta x ) \ ( i = 1,\dots, I) $ (where $ \Delta x = 2\pi /I $ is the mesh size, the discrete periodic boundary condition  $ u_{i+I} = u_i $ is assumed, and $ u $ denotes the vector $ (u_1,\dots,u_I)^{\top} $), 
	the spatial discretization can be written in the form 
	\begin{equation}\label{eq_ad}
	D \dot{u} = M \nabla H (u),
	\end{equation}
	where $ D \in \RR^{I \times I}$ and $ M \in \RR^{ I \times I} $ are matrices representing the forward difference and average operators, i.e., 
	\begin{align*}
	D &= \frac{1}{\Delta x} \begin{pmatrix}
	-1 & 1 &  &  \\ 
	& -1 & \ddots &  \\ 
	&  & \ddots & 1 \\ 
	1 &  &  & -1
	\end{pmatrix},&
	M &= \frac{1}{2} \begin{pmatrix}
	1 & 1 &  &  \\ 
	& 1& \ddots &  \\ 
	&  & \ddots & 1 \\ 
	1 &  &  & 1
	\end{pmatrix},
	\end{align*}
	and $ H $ is an approximation of $ \mathcal{H} $. 
	
	As $ \mathbf{1}  \in (\Range (D) )^{\perp} $ and $ M^{\top} \mathbf{1} = \mathbf{1} $ hold, 
	\[ \langle \mathbf{1} , \nabla H (u) \rangle = \langle M^{\top} \mathbf{1} , \nabla H(u) \rangle = \langle \mathbf{1} , M \nabla H(u) \rangle = \langle \mathbf{1} , D \dot{u} \rangle = 0 \]
	must be satisfied for all solutions $ u $ of \eqref{eq_ad}. 
	This implies that $H$ is proper, i.e., $ \nabla H (u) \in \Car (D) $, because $ \Null (D) = \mathrm{span} \{ \mathbf{1} \} $. 
	
	Therefore, from~\eqref{eq_chain_proper}, it is easy to confirm that $ H $ is a conserved quantity:  
	\[ \frac{\rd}{\rd t} H (u) = \langle \nabla H (u) , D^{\dagger} M \nabla H (u) \rangle = 0. \]
	Here, the last equality comes from the skew-symmetry of $ D^{\dagger} M $, 
	which can be verified using the fact that all eigenvalues of $ D^{\dagger} M $ are purely imaginary and $ D^{\dagger} M $ is circulant. 
\end{example}


\section{Conservation/dissipation law and linear gradient DAEs}
\label{sec_lg}

To establish a unified framework for discrete gradient methods applied to DAEs, 
we should introduce the DAE counterpart of the linear gradient system~\eqref{eq_lg} for ODEs (recall Section~\ref{subsec_pre_dg}). 

To this end, 
we consider the linear gradient DAE
\begin{equation}\label{eq_lgdae}
A \dot{z} = S(z) \nabla V(z),
\end{equation}
which is a natural extension of linear gradient system~\eqref{eq_lg}. 
In this section, we explore the cases in which this has a conservation/dissipation law. 
Therefore, in order to use~\eqref{eq_chain_proper}, 
$V$ is assumed to be proper.

\subsection{Conservation law of DAEs}
\label{subsec_con_dae}

As illustrated in Example~\ref{ex_ad_con}, 
the linear gradient DAE~\eqref{eq_lgdae} is conservative if $ A^{\dagger} S $ is skew-symmetric. 

\begin{proposition}\label{prop_lg_con}
	Let $ V : \RR^d \to \RR $ be a proper function. 
	Then, $ V $ is a conserved quantity of~\eqref{eq_lgdae} if $ A^{\dagger} S (z) $ is skew-symmetric for any $ z \in \CM $. 
\end{proposition}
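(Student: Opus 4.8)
The plan is to mimic the continuous computation used for skew-gradient ODEs, but starting from the identity \eqref{eq_chain_proper} that holds for proper functions. Since $V$ is assumed proper, for any solution $z$ of \eqref{eq_lgdae} we have
\[
\frac{\rd}{\rd t} V(z(t)) = \left\langle \nabla V(z) , A^{\dagger} f(z) \right\rangle,
\]
where here $f(z) = S(z) \nabla V(z)$. So the derivative of $V$ along solutions equals $\langle \nabla V(z), A^{\dagger} S(z) \nabla V(z) \rangle$.

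The key step is then to observe that this is a quadratic form in $\nabla V(z)$ with matrix $A^{\dagger} S(z)$, and a quadratic form associated with a skew-symmetric matrix vanishes identically: if $N^{\top} = -N$ then $\langle w, N w \rangle = \langle N^{\top} w, w \rangle = -\langle w, N w\rangle$, forcing $\langle w, N w \rangle = 0$. Applying this with $N = A^{\dagger} S(z)$ (which is skew-symmetric by hypothesis, for every $z \in \CM$, and in particular along the solution since $z(t) \in \CM$) and $w = \nabla V(z(t))$ gives $\frac{\rd}{\rd t} V(z(t)) = 0$ for every solution $z$, which is exactly the definition (Definition~\ref{def_fi}) of $V$ being a conserved quantity.

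I do not anticipate a serious obstacle here; the proposition is essentially the DAE analogue of the conservation argument in Section~\ref{subsec_pre_dg}, with the role of the ordinary chain rule played by \eqref{eq_chain_proper}. The one point that deserves a word of care is that the skew-symmetry of $A^{\dagger} S(z)$ is only assumed on $\CM$, so it is important that solutions of \eqref{eq_lgdae} stay in $\CM$ (which they do, since \eqref{eq_lgdae} is a regular DAE and all its solutions lie in the configuration manifold), and that properness of $V$ is likewise only needed on $\CM$, which is exactly how it is defined. Thus the proof is a two-line computation: substitute $f = S\nabla V$ into \eqref{eq_chain_proper}, then invoke skew-symmetry to kill the quadratic form.
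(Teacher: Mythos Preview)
Your proposal is correct and matches the paper's approach exactly: the paper does not spell out a formal proof but presents the proposition as an immediate consequence of \eqref{eq_chain_proper} combined with the skew-symmetry of $A^{\dagger}S(z)$ on $\CM$, which is precisely the two-line computation you describe. Your additional remark that solutions remain in $\CM$ (so the skew-symmetry hypothesis applies along them) is the only point requiring care, and you have handled it correctly.
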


Moreover, as shown below, we can prove the converse of Proposition~\ref{prop_lg_con}, 
which gives an existence theorem of the linear gradient structure for conservative DAEs, 
i.e., a generalization of Proposition~\ref{prop_MQR_conservative} for DAEs. 

\begin{theorem}\label{thm_exist_con}
	Let $ f : \RR^d \to \RR^d $ be a $C^r$ map with $ r \ge 1 , \  d > 1 $, 
	and the $ C^{r+1} $ function $ V : \RR^d \to \RR $ be a proper conserved quantity. 
	Then, there exists a matrix function $ S $ such that $C^r$, $ f = S  \nabla V $ on the domain $ \{ z \in \RR^d \mid \nabla V(z) \neq 0  \} $, and $ A^{\dagger} S (z) $ is skew-symmetric for any $ z \in \CM $. 
	Moreover, $S$ can be chosen so as to be bounded near every non-degenerate critical point, that is, $S$ is locally bounded if $V$ is a Morse function.
\end{theorem}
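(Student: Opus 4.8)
The plan is to mimic the proof of Proposition~\ref{prop_MQR_conservative} due to McLachlan--Quispel--Robidoux, but to carry it out in a way that respects the splitting of $\RR^d$ into $\Car(A)$ and $\Null(A)$, so that the extra skew-symmetry constraint on $A^\dagger S$ is automatic. The key observation is that for a proper conserved quantity $V$, equation~\eqref{eq_chain_proper} gives $\langle \nabla V(z), A^\dagger f(z)\rangle = 0$ for all $z\in\CM$, and moreover the constraint we actually want, ``$A^\dagger S(z)$ skew-symmetric,'' only involves the composition with $A^\dagger$; since $\Range(A^\dagger)=\Car(A)$ and $\nabla V(z)\in\Car(A)$ on $\CM$, only the part of $S(z)$ mapping into $\Car(A)$ is pinned down by the skew-symmetry requirement. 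First I would reduce to the pointwise algebraic problem: for each $z$ in the open set $\{\nabla V(z)\neq 0\}$, find a matrix $S(z)$ with $S(z)\nabla V(z)=f(z)$ such that $A^\dagger S(z)$ is skew-symmetric when $z\in\CM$ (off $\CM$ the skew-symmetry is not required, so there the classical construction applies verbatim). The standard device is to look for $S$ of the form $S(z) = \big(u(z)\nabla V(z)^\top - \nabla V(z) u(z)^\top\big) + (\text{correction})$ for a suitable vector field $u$, which is manifestly such that $S$ is skew after composing with anything that kills the symmetric part; here the natural ansatz is to take
\[
S(z) = \frac{A\, a(z)\, \nabla V(z)^\top - \nabla V(z)\, a(z)^\top A^\top}{\langle \nabla V(z), \nabla V(z)\rangle} + \text{lower-order terms},
\]
so that $A^\dagger S(z) = \big(A^\dagger A\, a\,\nabla V^\top - A^\dagger\nabla V\, a^\top A^\top\big)/|\nabla V|^2$; since $A^\dagger A$ is the orthogonal projector onto $\Car(A)$ and $\nabla V\in\Car(A)$ on $\CM$, both terms live in $\Car(A)$ and the expression is skew-symmetric there.

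Next I would solve $S(z)\nabla V(z) = f(z)$ for the free vector $a(z)$. Plugging the ansatz in and using $\langle\nabla V,\nabla V\rangle$ in the denominator, the equation becomes an affine condition on $a(z)$; the obstruction to solvability is precisely the requirement that the component of $f(z)$ along the directions forced to vanish by the skew-symmetry structure be zero — and this is exactly what the hypothesis ``$V$ is a \emph{proper} conserved quantity'' buys us, via $\langle\nabla V(z), A^\dagger f(z)\rangle=0$ on $\CM$ together with $f(z)\in\Range(A)$ (the implicit constraint~\eqref{eq_dae_ic}). One more point needs care: the ansatz above only controls $S$ on $\CM$, whereas $f=S\nabla V$ and $C^r$-smoothness are demanded on all of $\{\nabla V\neq 0\}$. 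I would handle this by first constructing any $C^r$ solution $S_0$ of $S_0\nabla V=f$ on $\{\nabla V\neq 0\}$ (this is the easy part, e.g. $S_0 = f\nabla V^\top/|\nabla V|^2$ made skew by antisymmetrization as in \cite{MQR1999}), then adding a $C^r$ correction term of the form $\big(\text{matrix built from }\nabla g_i\big)$ times a $C^r$ coefficient function — exactly as in the proof of Proposition~\ref{prop_proper}, where the full-row-rank of $W(z)$ guarantees a smooth solution of the defining linear system — chosen so that $A^\dagger S$ becomes skew on $\CM$ without disturbing $S\nabla V=f$. Using $\nabla g_i(z)$ in the correction is legitimate because those terms are annihilated after multiplying against $\nabla V(z)$ only up to factors of $g_i(z)$; one has to instead use the correction $\sum_i (A w_i)\, \nabla g_i^\top$-type blocks or, more simply, a term supported away from $\CM$ via a cutoff times $(g_1,\dots,g_k)$, so I would spell out the algebra carefully here.

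Finally, the local boundedness/Morse claim: near a non-degenerate critical point $z_*$ of $V$ one has $|\nabla V(z)| \gtrsim |z - z_*|$, and the constructed $S$ has a numerator vanishing to first order (since $f(z_*)=S(z_*)\nabla V(z_*)$ forces, together with $f$ being tangent and $\nabla V$ vanishing, that the numerator is $O(|z-z_*|)$ — more precisely one rescales the ansatz so numerator and denominator are both quadratic), so the ratio stays bounded; this is the same Taylor-expansion argument as in \cite{MQR1999}, and I would simply invoke it after checking the numerator of our modified $S$ still vanishes to the right order. I expect the main obstacle to be the bookkeeping in the middle step: reconciling the requirement that $S$ be globally $C^r$ on $\{\nabla V\neq0\}$ with the skew-symmetry of $A^\dagger S$ being imposed only on the lower-dimensional set $\CM$, i.e. constructing the smooth correction that fixes up $A^\dagger S$ on $\CM$ while leaving $S\nabla V = f$ intact everywhere. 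The full-rank lemma from the proof of Proposition~\ref{prop_proper} (the matrix $(A^\top\ \nabla g_1\ \cdots\ \nabla g_k)$ has full row rank on $\CM$) is the right tool, and I would lean on it exactly as that proof does.
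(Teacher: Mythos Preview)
Your proposal is significantly more complicated than needed, and the key ansatz contains an error.

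The paper's proof is essentially a one-liner: set
\[
S(z) \;=\; \frac{f(z)\,\nabla V(z)^\top \;-\; A\,\nabla V(z)\,\bigl(A^{\dagger} f(z)\bigr)^\top}{\|\nabla V(z)\|_2^2}.
\]
Then $S\nabla V = f - A\nabla V\,\langle A^{\dagger} f,\nabla V\rangle/\|\nabla V\|_2^2 = f$, using the proper-conserved-quantity identity $\langle \nabla V, A^{\dagger} f\rangle = 0$ from~\eqref{eq_chain_proper}; and on $\CM$ properness gives $A^{\dagger} A\,\nabla V = \nabla V$, so
\[
A^{\dagger} S(z) \;=\; \frac{(A^{\dagger} f)\,\nabla V^\top \;-\; \nabla V\,(A^{\dagger} f)^\top}{\|\nabla V\|_2^2},
\]
which is of the form $uv^\top - vu^\top$ and hence skew. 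The Morse/boundedness claim then follows the argument of \cite{MQR1999} verbatim after one checks that $A^{\dagger} f$ vanishes at every non-degenerate critical point of $V$.

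Your ansatz $S = (Aa\,\nabla V^\top - \nabla V\,a^\top A^\top)/|\nabla V|^2$ is \emph{itself} a skew matrix (it is $uv^\top - vu^\top$ with $u = Aa$, $v = \nabla V$), but left-multiplying a skew matrix by $A^{\dagger}$ does not in general yield a skew matrix; the reason you give (``both terms live in $\Car(A)$'') is not a criterion for skew-symmetry. So the step you describe as automatic is exactly where the ansatz breaks. The fallback plan --- take a generic $S_0$ with $S_0\nabla V = f$ and then correct it using blocks built from $\nabla g_i$ and the full-rank lemma from the proof of Proposition~\ref{prop_proper} --- may be salvageable in principle, but it is far more involved than the paper's direct formula, and your own description of it is explicitly tentative. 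No patching, no cutoffs, and no appeal to the constraint functions $g_i$ are needed: the single $C^r$ expression above is defined on all of $\{\nabla V\neq 0\}$, and the skew-symmetry of $A^{\dagger} S$ holds automatically wherever $\nabla V\in\Car(A)$, in particular on $\CM$.
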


\begin{proof}
	To prove the theorem, we confirm that all conditions are satisfied by $ S $ defined as follows: 
	\[ S(z) = \frac{ f(z) \left( \nabla V(z) \right)^{\top} - A \nabla V(z) \left( A^{\dagger} f(z) \right)^{\top} }{\| \nabla V(z) \|_2^2}. \]
	Note that $ S \nabla V (z) = f(z) $ holds, because $ V $ is a proper conserved quantity. 
	Moreover, the properness of $V$ implies that
	\[ A^{\dagger} S(z) =  \frac{ A^{\dagger} f(z) \left( \nabla V(z) \right)^{\top} - \nabla V(z) \left( A^{\dagger} f(z) \right)^{\top} }{\| \nabla V(z) \|_2^2}, \]
	and so $ A^{\dagger} S(z) $ is skew-symmetric. 
	
	Second, we consider the behavior of $S(z) $ in a neighborhood of a point $z$ for which $ \nabla V(z) = 0 $. 
	Though this is quite similar to the corresponding part of Proposition~\ref{prop_MQR_conservative}, 
	we give the full proof for the readers' convenience.  
	
	Under the Morse lemma, there is a coordinate chart about any non-degenerate critical point of $V$ in which
	\[ V(x) = V(0) + x^{\top} \nabla^2 V(0) x, \]
	where the Hessian $ \nabla^2 V(0) $ of $ V$ at $x = 0$ is non-degenerate.  
	For an arbitrary $x$ and sufficiently small $ \lambda $, 
	\[ \lambda x^{\top} \nabla^2 V(0) A^{\dagger} f (\lambda x) = (\nabla V(\lambda x) )^{\top} A^{\dagger} f(\lambda x) = 0 \]
	holds. This implies $ x^{\top} \nabla^2 V(0) A^{\dagger} f(0) = 0 $, so that the non-degeneracy of $\nabla^2 V(0)$ implies $ A^{\dagger} f(0) = 0 $. 
	
	As $ A^{\dagger} f $ vanishes at $ x = 0$, $ \| A^{\dagger} f(x) \|_2 / \| x \|_2 $ is locally bounded. 
	The non-degeneracy of $\nabla^2 V(0)$ implies that $ \| x \|_2 / \| \nabla V(x) \|_2 $ is also locally bounded. 
	Therefore, the inequality
	\[  \left| S_{ij}  \right| \le \frac{ \| f \|_2 \| \nabla V\|_2 }{\| \nabla V \|_2^2} + \frac{ \| A \nabla V \|_2 \| A^{\dagger} f \|_2 }{\| \nabla V\|_2^2}  \le 2 \| A\|_2 \frac{ \| A^{\dagger} f \|_2 }{ \| \nabla V\|_2 } \le 2 \| A\|_2 \frac{ \| A^{\dagger} f \|_2 }{\| x \|_2} \frac{ \| x \|_2 }{ \| \nabla V\|_2 }  \]
	implies that $ S $ is locally bounded. 
\end{proof}

Though Proposition~\ref{prop_lg_con} and Theorem~\ref{thm_exist_con} are natural extensions of the ODE case, 
they are very important in this context. 
As described in Examples~\ref{ex_hs_con} and \ref{ex_ad_con}, 
the proofs of conservation laws in existing results are quite different: 
the constraint is explicitly used in Example~\ref{ex_hs_con}, whereas the skew-symmetry of $D^{\dagger} M $ is essential in Example~\ref{ex_ad_con}. 
However, the above proposition and theorem imply that 
these conservation laws can be understood by the linear gradient DAE~\eqref{eq_lgdae} and skew-symmetry of $ A^{\dagger} S(z) $ (see Example~\ref{ex_hs_lg} below). 

\begin{example}\label{ex_hs_lg}
	For the constrained Hamiltonian system~\eqref{eq_hs}  
	and the Hamiltonian $H$, 
	the matrix $ S $ defined in Theorem~\ref{thm_exist_con} can be written in the form
	\begin{equation*}
	\frac{1}{h} \begin{pmatrix}
	H_p H_q^{\top} - H_q H_p^{\top} & H_q H_q^{\top} + H_p H_p^{\top} + H_q \lambda^{\top} G &  \\ 
	-H_q H_q^{\top} - H_p H_p^{\top} - G^{\top} \lambda H_q^{\top} & H_p H_q^{\top} - H_q H_p^{\top} + H_p \lambda^{\top} G - G^{\top} \lambda H_p^{\top} &  \\ 
	g(q) H_q^{\top} & g(q) H_p^{\top} & O_m
	\end{pmatrix},
	\end{equation*}
	where $ H_q = \partial H / \partial q $, $ H_p = \partial H/ \partial p $, $ G = J g(q) $, and $ h = \| H_q \|_2^2 + \| H_p \|_2^2 $. 
	Though it is easy to observe that the slightly simple matrix function 
	\[ S(q,p,\lambda ) = \frac{1}{h} \begin{pmatrix}
	O_n & h I + H_q \lambda^{\top} G &  \\ 
	-h I - G^{\top} \lambda H_q^{\top} & H_p \lambda^{\top} G - G^{\top} \lambda H_p^{\top} &  \\ 
	g(q) H_q^{\top} & g(q) H_p^{\top} & O_m
	\end{pmatrix} \]
	can be used, the linear gradient structure for the Hamiltonian $H$ is necessarily complicated 
	because the gradient of $H$ has no information about $ g $ and $ J g $. 
	
	However, for the augmented Hamiltonian $V$ defined by~\eqref{eq_aug_hamiltonian}, 
	the constrained Hamiltonian system~\eqref{eq_hs} has a simple linear gradient structure: 
	\begin{equation}\label{eq_hs_lg}
	\begin{pmatrix}
	I_n &  &  \\ 
	& I_n &  \\ 
	&  & O_h
	\end{pmatrix} 
	\begin{pmatrix} \dot{q} \\ \dot{p} \\ \dot{\lambda} \end{pmatrix}
	=
	\begin{pmatrix}
	O_n & I_n  &  \\ 
	- I_n & O_n &  \\ 
	&  & I_h
	\end{pmatrix}
	\nabla V(q,p,\lambda). 
	\end{equation}
	This linear gradient structure clearly satisfies the condition of Proposition~\ref{prop_lg_con}. 
\end{example}

In summary, 
for any conserved quantity $ \tilde{V} $ (see Remark~\ref{rem_hi_proper} for the handling of constraints), 
there exists a conserved quantity $V$ such that $ \tilde{V} = V $ holds on $ \CM $ (Proposition~\ref{prop_proper}), 
and there exists an appropriate linear gradient structure with respect to $V$ (Theorem~\ref{thm_exist_con}). 
Therefore, 
in principal, our linear gradient DAE~\eqref{eq_lgdae} with skew-symmetric $ A^{\dagger} S(z) $ can express all conservation laws for DAEs. 
Thus, we believe it is meaningful to consider the conservative discretization of such linear gradient DAEs (see Section~\ref{sec_dg}). 

\subsection{Dissipation law of DAEs}
\label{subsec_dis_dae}

In this section, we consider the dissipative case. 
Actually, we can deal with the dissipative case in a similar manner to the conservative case.  
However, we present the method in full for readers' convenience. 

First, let us introduce the notion of a dissipated quantity for DAEs of the form~\eqref{eq_dae}. 

\begin{definition}[Dissipated quantity]\label{def_proper_dl}
	Let $ V : \RR^d \to \RR $ be a $C^r$ map with $ r \ge 1$, $ d \ge 1 $. 
	Then, $V$ is said to be a \emph{dissipated quantity} if $ \frac{\rd}{\rd t} V(z(t)) \le 0 $ holds for any solution $z $ of DAE~\eqref{eq_dae}. 
\end{definition}

\begin{remark}\label{rem_lyapunov}
	There is a difference between the proper dissipated quantity used here and the existing definition of Lyapunov functions for DAEs in the literature. 
	Since our focus is on extending the concept of ODEs to DAEs, 
	we are not concerned with the additional properties of Lyapunov functions such as coercivity, boundedness, and strict dissipation. 
	
	Baji\'c~\cite{B1987} dealt with DAEs of the form $ A(t) \dot{z} = f(z) $ and considered their Lyapunov functions. 
	To overcome the difficulty that $ (\rd / \rd t) V(z(t)) $ cannot be expressed by means of $A$ and $f$ (recall (c) in Section~\ref{subsec_def_proper}), Baji\'c introduced a useful subclass of Lyapunov functions by adding the assumption that
	``the Lyapunov function $ V $ depends only on $t$ and $y$, where $y = \phi (t,z) $ is an auxiliary variable such that $ \dot{y} $ can be explicitly expressed by means of $ A $ and $f$.''
	In contrast, Liberzon--Trenn~\cite{LT2012} dealt with DAEs of the form~$ A(z) \dot{z} = f(z) $ under the assumption that 
	``there exists a continuous function $ \phi $ such that $ \langle \nabla V(z) , w \rangle =  \phi (z , A(z) w ) $ for any $ z \in \CM $ and $ w \in T_z \CM $.''
	
	The concept of properness is similar to the assumptions above. 
	However, as mentioned in the Introduction, the advantage of properness is that it does not lose generality.  
	This advantage truly relies on the autonomous nature of our target DAEs in comparison with~\cite{B1987}, 
	although properness can be extended to the case in  \cite{LT2012} under some assumption on $A(z)$. 
\end{remark}

Corresponding to Proposition~\ref{prop_lg_con}, 
the following proposition clearly holds for dissipative systems because of~\eqref{eq_chain_proper}. 

\begin{proposition}\label{prop_lg_dis}
	Let $ V : \RR^d \to \RR $ be a proper function. 
	Then, $ V $ is a dissipated quantity of~\eqref{eq_lgdae} if $ A^{\dagger} S (z) $ is negative semidefinite for any $ z \in \CM $. 
\end{proposition}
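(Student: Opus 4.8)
The plan is to imitate the proof of Proposition~\ref{prop_lg_con} almost verbatim, replacing ``skew-symmetric'' by ``negative semidefinite'' and the equality ``$=0$'' by the inequality ``$\le 0$''. First I would fix an arbitrary solution $z:[0,T)\to\RR^d$ of the linear gradient DAE~\eqref{eq_lgdae}. Since~\eqref{eq_lgdae} is exactly the instance of the general DAE~\eqref{eq_dae} obtained by taking $f(z)=S(z)\nabla V(z)$, and since $V$ is assumed proper, the chain-rule identity~\eqref{eq_chain_proper} applies and gives
\[
\frac{\rd}{\rd t}V(z(t))=\left\langle \nabla V(z(t)),A^{\dagger}f(z(t))\right\rangle
=\left\langle \nabla V(z(t)),A^{\dagger}S(z(t))\nabla V(z(t))\right\rangle .
\]

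Next I would recall from Section~\ref{subsec_pre_dae} that every solution of the DAE remains on the configuration manifold, i.e.\ $z(t)\in\CM$ for all $t$. Hence the hypothesis guarantees that $A^{\dagger}S(z(t))$ is negative semidefinite, so $\langle x,A^{\dagger}S(z(t))x\rangle\le 0$ for every $x\in\RR^d$; choosing $x=\nabla V(z(t))$ yields $\frac{\rd}{\rd t}V(z(t))\le 0$. As the solution was arbitrary, $V$ is a dissipated quantity of~\eqref{eq_lgdae} in the sense of Definition~\ref{def_proper_dl}, which is what we wanted.

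There is essentially no obstacle: the whole argument is packaged into the identity~\eqref{eq_chain_proper}, whose derivation already exploited the properness of $V$ (namely $\nabla V(z)\in\Car(A)=\Null(A)^{\perp}$) to discard the kernel term $e(t)$ coming from~\eqref{eq_vf_ker}. The only point deserving a line of care is that negative semidefiniteness of $A^{\dagger}S$ is assumed only on $\CM$, which is enough precisely because exact solutions never leave $\CM$; this is the same mild subtlety flagged after Definition~\ref{def_fi} about numerical iterates possibly lying off $\CM$, but it is a non-issue at the continuous level treated here.
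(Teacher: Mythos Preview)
Your proof is correct and is exactly the argument the paper has in mind: the paper does not spell out a proof of Proposition~\ref{prop_lg_dis} at all, but simply states that it ``clearly holds for dissipative systems because of~\eqref{eq_chain_proper}.'' Your write-up just makes that one-line justification explicit, including the observation that $z(t)\in\CM$ so the semidefiniteness hypothesis applies.
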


\begin{example}\label{ex_fric_dis}
	Uhler--Betch~\cite{UB2010} considered mechanical systems with linear friction written in the form
	\begin{equation}\label{eq_fric}
	\begin{cases}
	\dot{q} = v\\
	M \dot{v} = - \nabla U(q) - (J g(q) )^{\top} \lambda - F v \\
	g(q) = 0,
	\end{cases}
	\end{equation}
	where $ q \in \RR^n $ is the configuration vector, $ v \in \RR^n $ is the velocity, 
	$ M \in \RR^{n \times n} $ is the mass matrix ($M$ is assumed to be symmetric and positive definite), $ U : \RR^d \to \RR $ is a potential function, $ g : \RR^d \to \RR^h $ represents the holonomic constraints, and $ F v $ express the friction ($ F $ is a nonnegative diagonal matrix). 
	In this case, the energy function $ H (q,v) = \langle v , M v \rangle + U (q) $ is a dissipated quantity: 
	\begin{align*}
	\frac{\rd}{\rd t} H (q(t) , v(t))
	&= \left\langle \nabla U(q) , \dot{q} \right\rangle + \left\langle Mv , \dot{v} \right\rangle \\
	&= \left\langle \nabla U(q) , v \right\rangle + \left\langle v , - \nabla U(q) - (J g(q) )^{\top} \lambda - F v \right\rangle \\
	&= - \langle v, F v \rangle \le 0.
	\end{align*}
	Moreover, in a manner similar to Example~\ref{ex_hs_con}, we can confirm that $H$ is proper. 
	
	It is also easy to show that the augmented energy function 
 	$ V(q,v,\lambda) = H (q,v) + \langle \lambda , g (q) \rangle $
	is a proper dissipated quantity. 
\end{example}

As well as the conservative case, 
we can establish the converse of Proposition~\ref{prop_lg_dis}, 
which is a generalization of Proposition~\ref{prop_MQR_dissipative} for DAEs. 
However, in this case, we seek the matrix function $ S$ such that $ A^{\dagger} S (z) $ is negative semidefinite, whereas Proposition~\ref{prop_MQR_dissipative} ensures the existence of a negative definite matrix function. 
The singularity of $ A^{\dagger} $ means that this discrepancy cannot be solved. 
It should also be noted that 
we cannot ensure the boundedness of $S$ in general, 
whereas we can always construct a locally bounded $ S $ in the conservative case 
(see \cite[Proposition~2.9]{MQR1999}).

\begin{theorem}\label{thm_exist_dis}
	Let $ f : \RR^d \to \RR^d $ be a $C^r$ map with $ r \ge 1 , \  d\ge1 $, and the $ C^{r+1} $ function $ V : \RR^d \to \RR $ be a proper dissipated quantity.  
	Then, there exists a matrix function $ S $ such that $C^r$, $ f = S  \nabla V $ on the domain $ \{  z \in \RR^d \mid \langle A^{\dagger} f (z) , \nabla V(z) \rangle \neq 0  \} $, and $ A^{\dagger} S(z) $ is negative semidefinite for any $ z \in \CM$.  
\end{theorem}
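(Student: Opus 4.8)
The plan is to closely follow the proof of Theorem~\ref{thm_exist_con}, modifying only the explicit matrix function $S$. In the conservative case, $\langle A^{\dagger} f(z), \nabla V(z)\rangle$ vanished on $\CM$ and played a double role: it made $S\nabla V = f$, and it made $v \mapsto \langle v, A^{\dagger}S(z)v\rangle$ identically zero. Here this inner product is only $\le 0$ on $\CM$, so I would add a rank-one correction that, on the one hand, repairs the identity $S\nabla V = f$ and, on the other, leaves behind exactly this nonpositive quadratic form. Concretely, take
\[ S(z) = \frac{ f(z) \left( \nabla V(z) \right)^{\top} - A \nabla V(z) \left( A^{\dagger} f(z) \right)^{\top} }{\| \nabla V(z) \|_2^2} + \frac{ \left\langle A^{\dagger} f(z), \nabla V(z) \right\rangle }{\| \nabla V(z) \|_2^4}\, A \nabla V(z) \left( \nabla V(z) \right)^{\top}, \]
defined on $ \{ z \in \RR^d \mid \nabla V(z) \neq 0 \} $; the first summand is the $S$ of Theorem~\ref{thm_exist_con}.

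The verification is then three short computations. First, $ S(z)\nabla V(z) = f(z) $ follows by direct algebra: the two multiples of $A\nabla V(z)$ produced by the two summands cancel, so this identity holds on all of $\{\nabla V \neq 0\}$ without invoking any conservation or dissipation. Second, properness of $V$ gives $ A^{\dagger}A\,\nabla V(z) = \nabla V(z) $ for $ z \in \CM $, whence (abbreviating $ w = \nabla V(z) $, $ h = A^{\dagger} f(z) $)
\[ A^{\dagger} S(z) = \frac{ h\, w^{\top} - w\, h^{\top} }{\| w \|_2^2} + \frac{\langle h, w \rangle}{\| w \|_2^4}\, w\, w^{\top}, \]
so that for every $ v \in \RR^d $,
\[ \left\langle v, A^{\dagger} S(z)\, v \right\rangle = \frac{ \left\langle A^{\dagger} f(z), \nabla V(z) \right\rangle }{\| \nabla V(z) \|_2^4}\, \left\langle v, \nabla V(z) \right\rangle^2, \]
the skew-symmetric part contributing nothing. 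Third, since $V$ is a proper dissipated quantity, \eqref{eq_chain_proper} identifies $ \langle A^{\dagger} f(z), \nabla V(z) \rangle $ with $ \frac{\rd}{\rd t} V(z(t)) \le 0 $ for $ z \in \CM $, so the displayed quadratic form is nonpositive and $ A^{\dagger}S(z) $ is negative semidefinite on $\CM$. The $C^r$-regularity of $S$ away from $\{\nabla V = 0\}$ is immediate from that of $f$ and $V$.

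What remains is the behaviour near points where $\nabla V$ vanishes, and this is where I expect the only real difficulty. One would again invoke the Morse lemma, as in Theorem~\ref{thm_exist_con}, to describe $S$ near a nondegenerate critical point $z_0$. However, the key step of that argument — deducing $A^{\dagger} f(z_0) = 0$ from $ (\nabla V(\lambda x))^{\top} A^{\dagger} f(\lambda x) = 0 $ holding for all $x$ — breaks down, because dissipation only furnishes $ \langle \nabla V, A^{\dagger} f \rangle \le 0 $ on $\CM$, not on a full neighbourhood in $\RR^d$. Hence $A^{\dagger}f$ need not vanish at $z_0$, the correction term $ \langle A^{\dagger} f, \nabla V \rangle\, \| \nabla V \|_2^{-4}\, A \nabla V\, (\nabla V)^{\top} $ can blow up, and no local boundedness of $S$ can be claimed. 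I would therefore state the theorem exactly as above — existence of $S$, with $A^{\dagger}S$ negative semidefinite rather than negative definite (the singularity of $A^{\dagger}$ forbidding the latter, unlike in Proposition~\ref{prop_MQR_dissipative}) and with no boundedness assertion — and simply record these as unavoidable limitations.
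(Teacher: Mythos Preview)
Your proof is correct, but it takes a different route from the paper's. The paper simply sets
\[
S(z) = \frac{1}{\langle A^{\dagger} f(z), \nabla V(z) \rangle}\, f(z)\,\bigl(A^{\dagger} f(z)\bigr)^{\top},
\]
a single rank-one term directly analogous to Proposition~\ref{prop_MQR_dissipative}; then $S\nabla V = f$ is immediate, and $A^{\dagger}S(z) = \langle A^{\dagger} f,\nabla V\rangle^{-1}\,(A^{\dagger} f)(A^{\dagger} f)^{\top}$ is manifestly negative semidefinite on $\CM$ by the sign of the scalar. Your construction instead perturbs the skew $S$ of Theorem~\ref{thm_exist_con} by a rank-one correction. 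This buys you something: your $S$ is defined on the larger set $\{\nabla V\neq 0\}$ rather than only on $\{\langle A^{\dagger} f,\nabla V\rangle\neq 0\}$, the identity $S\nabla V = f$ holds there without invoking dissipation, and your $S$ degenerates to the conservative one when $\langle A^{\dagger} f,\nabla V\rangle$ vanishes. The paper's choice is shorter and more transparent; yours is slightly stronger than the statement and unifies the two cases. Your closing discussion of why the Morse-lemma boundedness argument fails is correct and matches the remark the paper makes just before the theorem, but since the theorem as stated claims no boundedness, it is commentary rather than part of the proof.
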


\begin{proof}
	It is clear that the map $ S $ defined as 
	\begin{equation}
		S(z) = \frac{1}{\langle A^{\dagger} f (z) , \nabla V(z) \rangle} f(z) ( A^{\dagger} f(z) )^{\top} 
	\end{equation}
	satisfies all the conditions. 
\end{proof}

\begin{example}
	For mechanical systems with linear friction~\eqref{eq_fric}, 
	the linear gradient form with respect to the energy function $H$ is as complicated 
	as the case of the constrained Hamiltonian system (Example~\ref{ex_hs_lg}):
	\begin{equation}
	\begin{pmatrix} I_n & & \\ & M & \\ & & O_h \end{pmatrix}
	\begin{pmatrix} \dot{q} \\ \dot{v} \\ \dot{\lambda} \end{pmatrix}
	= - \frac{1}{ \langle v , F v \rangle }
	\begin{pmatrix} v v^{\top} & - v \Phi^{\top} M^{-1} & \\ - \Phi v^{\top} & \Phi \Phi^{\top} M^{-1} & \\ g(q) v^{\top} & g(q) \Phi^{\top} & O_h \end{pmatrix}
	\nabla H(q,v,\lambda),
	\end{equation}
	where $ \Phi :=  \nabla U (q) + (Jg (q))^{\top} \lambda + F v$. 

	However, for the augmented energy function $V$, 
	there is a relatively simple linear gradient form
	\begin{equation}
	\begin{pmatrix} I_n & & \\ & M & \\ & & O_h \end{pmatrix}
	\begin{pmatrix} \dot{q} \\ \dot{v} \\ \dot{\lambda} \end{pmatrix}
	=
	\begin{pmatrix} O_n & M^{-1} & \\ - I_n & - F M^{-1} & \\ & & I_h \end{pmatrix}
	\nabla V(q,v,\lambda).
	\end{equation}
	This linear gradient structure clearly satisfies the condition of Proposition~\ref{prop_lg_dis}. 
\end{example}

\section{Discrete gradient methods for linear gradient DAEs}
\label{sec_dg}

In the following, we focus on the conservative case (the dissipative case can be treated similarly). 

For linear gradient DAEs~\eqref{eq_lgdae}, 
a one-step method can be constructed as 
\begin{equation}\label{eq_lgdae_dg}
A \frac{\zd{m+1}{} -\zd{m}{}}{\Delta t } = \overline{S} \big( \zd{m+1}{}, \zd{m}{} \big) \overline{\nabla} V \big( \zd{m+1}{}, \zd{m}{} \big)
\end{equation}
using a discrete gradient $ \overline{\nabla} V $ and some consistent approximation $ S_{\rd} $ of $ S $. 
Here, as a discrete counterpart of Proposition~\ref{prop_lg_con}, the following proposition clearly holds. 

\begin{proposition}\label{prop_dg_con}
	Suppose that $ \overline{\nabla} V \big( \zd{m+1}{} , \zd{m}{} \big) \in \Car (A) $ holds and 
	$ A^{\dagger} \overline{S} \big( \zd{m+1}{} , \zd{m}{} \big) $ is skew-symmetric 
	for a numerical solution $ \zd{m+1}{} $ of the discrete gradient method~\eqref{eq_lgdae_dg}. 
	Then, $ V \big( \zd{m+1}{} \big) = V \big( \zd{m}{} \big) $ holds.
\end{proposition}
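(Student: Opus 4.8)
The plan is to carry out the discrete analogue of the computation~\eqref{eq_chain_proper}, with the only new ingredient being careful bookkeeping for the singular matrix $A$. First I would apply $A^{\dagger}$ to both sides of the scheme~\eqref{eq_lgdae_dg}, obtaining
\[
A^{\dagger} A \, \frac{\zd{m+1}{} - \zd{m}{}}{\Delta t} = A^{\dagger} \overline{S}\big(\zd{m+1}{}, \zd{m}{}\big)\, \overline{\nabla} V\big(\zd{m+1}{}, \zd{m}{}\big),
\]
and then take the inner product with $\overline{\nabla} V\big(\zd{m+1}{}, \zd{m}{}\big)$. By the assumed skew-symmetry of $A^{\dagger} \overline{S}\big(\zd{m+1}{}, \zd{m}{}\big)$, the right-hand side pairing $\big\langle \overline{\nabla} V, A^{\dagger} \overline{S}\, \overline{\nabla} V \big\rangle$ vanishes, exactly as in the ODE computation in Section~\ref{subsec_pre_dg}.

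The one place that needs care is the left-hand side $\big\langle \overline{\nabla} V, A^{\dagger} A (\zd{m+1}{} - \zd{m}{}) \big\rangle$. Here I would invoke the properties of the Moore--Penrose inverse recalled in Section~\ref{subsec_pre_mp}: $A^{\dagger} A$ is the orthogonal projector onto $\Range(A^{\dagger}) = \Car(A)$, so it is self-adjoint, and the hypothesis $\overline{\nabla} V\big(\zd{m+1}{}, \zd{m}{}\big) \in \Car(A)$ gives $A^{\dagger} A\, \overline{\nabla} V = \overline{\nabla} V$. Consequently
\[
\big\langle \overline{\nabla} V,\, A^{\dagger} A (\zd{m+1}{} - \zd{m}{}) \big\rangle = \big\langle A^{\dagger} A\, \overline{\nabla} V,\, \zd{m+1}{} - \zd{m}{} \big\rangle = \big\langle \overline{\nabla} V,\, \zd{m+1}{} - \zd{m}{} \big\rangle,
\]
and combining the two sides (and clearing the harmless factor $\Delta t$) yields $\big\langle \overline{\nabla} V\big(\zd{m+1}{}, \zd{m}{}\big),\, \zd{m+1}{} - \zd{m}{} \big\rangle = 0$.

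Finally I would apply the discrete chain rule, i.e.\ property~1 of Definition~\ref{def_dg}, which gives $V\big(\zd{m+1}{}\big) - V\big(\zd{m}{}\big) = \big\langle \overline{\nabla} V\big(\zd{m+1}{}, \zd{m}{}\big),\, \zd{m+1}{} - \zd{m}{} \big\rangle = 0$, proving the claim. I do not expect a genuine obstacle: the argument is a direct discretization of~\eqref{eq_chain_proper} in which the identity is replaced by the projector $A^{\dagger} A$, and the whole content is the observation that $\overline{\nabla} V \in \Car(A)$ is precisely what makes that substitution transparent (this is the discrete counterpart of the role played by properness in the continuous case).
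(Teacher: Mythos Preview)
Your argument is correct and is essentially the approach the paper has in mind: the proposition is stated there as ``clearly holds'' as the discrete counterpart of Proposition~\ref{prop_lg_con}, and the detailed computation you give matches the paper's explicit proof of the closely related Theorem in Section~\ref{sec_dg_index1}. The only cosmetic difference is that the paper phrases the key step via the decomposition $\frac{\zd{m+1}{}-\zd{m}{}}{\Delta t} = A^{\dagger}\overline{S}\,\overline{\nabla}V + e^{(m+1)}$ with $e^{(m+1)}\in\Null(A)$ (so that $\langle \overline{\nabla}V, e^{(m+1)}\rangle = 0$ by $\overline{\nabla}V\in\Car(A)$), whereas you achieve the same effect by moving the projector $A^{\dagger}A$ across the inner product; these are equivalent.
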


Though the definition of discrete gradient methods and Proposition~\ref{prop_dg_con} are quite natural extensions of the ODE case (see Section~\ref{subsec_pre_dg}), 
the assumptions in Proposition~\ref{prop_dg_con} are troublesome: 
\begin{enumerate}
	\item $ \overline{\nabla} V \big( \zd{m+1} , \zd{m}{} \big) \in \Car (A) $ cannot be ensured in general:\\
	Even when $ V $ is a proper function, its discrete gradient need not belong to $ \Car (A)$. 
	For example, the nonlinear nature of $ \CM $ means that $ (1- \xi) z + \xi z' \in \CM $ cannot be guaranteed, even when $ z , z' \in \CM $ and $ \xi \in (0,1) $. Hence, the average vector field~\eqref{eq_avf} cannot be guaranteed to belong to $ \Car (A) $ (we consider how to overcome this issue in the next section). 
	\item An appropriate discretization of $S$ can become nontrivial:\\
	In the continuous case, the skew-symmetry is only ensured on $ \CM $. 
	Thus, if the numerical solution $ \zd{m}{} $ satisfies $ \zd{m}{} \in \CM $, $ \overline{S} \big( \zd{m+1}{},\zd{m}{} \big) = \big( S \big( \zd{m+1}{} \big) + S \big( \zd{m}{} \big) \big)/2 $ is an appropriate approximation (if symmetry is not required, one can choose a simpler discretization $ \overline{S} \big( \zd{m+1}{},\zd{m}{} \big) = S \big( \zd{m}{} \big) $). 
	However, when we use a numerical scheme that does not satisfy $ \zd{m}{} \in \CM $, 
	the situation becomes far more challenging. 
	Fortunately, however, a number of examples have constant $ S $. 
	Thus, we leave this issue for future work. 
\end{enumerate}

We can overcome these difficulties in existing cases as shown in the following examples. 

\begin{example}\label{ex_hs_dg}
	For the linear gradient form~\eqref{eq_hs_lg}, we consider the discrete gradient method 
	\begin{equation}\label{eq_hs_dg}
	\frac{1}{\Delta t}
	\begin{pmatrix}
	I_n &  &  \\ 
	& I_n &  \\ 
	&  & O_h
	\end{pmatrix} 
	\left(
	\begin{pmatrix} q^{(m+1)} \\ p^{(m+1)} \\ \lambda^{(m+1)} \end{pmatrix}
	-
	\begin{pmatrix} q^{(m)} \\ p^{(m)} \\ \lambda^{(m)} \end{pmatrix} \right)
	=
	\begin{pmatrix}
	O_n & I_n  &  \\ 
	- I_n & O_n &  \\ 
	&  & I_h
	\end{pmatrix}
	\overline{\nabla} V. 
	\end{equation}
	Here, if we employ the discrete gradient in the form
	\begin{equation}
	\overline{\nabla} V = \begin{pmatrix} \overline{\nabla}_q H + (\overline{J} g )^{\top} \frac{\lambda^{(m+1)} + \lambda^{(m)}}{2} \\ \overline{\nabla}_p H \\ \frac{g(q^{(m+1)}) + g(q^{(m)}) }{2} \end{pmatrix},
	\end{equation}
	the discrete gradient method~\eqref{eq_hs_dg} coincides with the scheme employed by Gonzalez~\cite{G1999} under the assumption $ g(q^{(0)}) = 0 $ 
	(we omit several obvious arguments for brevity). 
	$ ( ( \overline{\nabla}_q H)^{\top} , ( \overline{\nabla}_p H )^{\top} )^{\top}$ is a discrete gradient of $ H $, 
	and the discrete Jacobian $ \overline{J} g $ is defined in a similar manner to the discrete gradient. 
	In this case, \cite[Proposition~3.1]{G1999} ensures that $ H $ is preserved and $ g \big( q^{(m)} \big) = 0 $ holds for all $ m$. 
	
	We can also prove the discrete conservation law using Proposition~\ref{prop_dg_con}. 
	First, note that $ g \big( q^{(m+1) } \big) +  g \big( q^{(m) } \big) = 0 $ implies $ g \big(q^{(m+1)} \big) = 0 $ (under the assumption $ g \big( q^{(m) } \big) = 0 $). 
	Since this ensures $ \overline{\nabla} V \in \Car (A) $, all conditions of Proposition~\ref{prop_dg_con} are fulfilled, and so $ V \big(q^{(m+1)}, p^{(m+1)}, \lambda^{(m+1)}\big) = V \big(q^{(m)}, p^{(m)}, \lambda^{(m)}\big) $ holds for all $m$. 
	Moreover, the definition of $V$ and $ g \big( q^{(m) } \big) = 0 $ implies the conservation of $H$. 
	
	Note that $ (q^{(m)} , p^{(m)}, \lambda^{(m)} ) \in \CM $ does not hold in general. 
	It is difficult to overcome this issue, because the constrained Hamiltonian system has a higher index (at least 3). 
\end{example}

\begin{example}\label{ex_ad_dg}
	For the spatial discretization~\eqref{eq_ad} of the variational PDE~\eqref{eq_utx}, 
	we consider the discrete gradient method
	\begin{equation}\label{eq_ad_dg}
	D \frac{\ud{m+1}{} - \ud{m}{}}{\Delta t } = M \overline{\nabla} H \big( \ud{m+1}{} , \ud{m}{} \big),
	\end{equation}
	which coincides with the full discretization employed in~\cite{FSM2016}. 
	
	In this case, similar to the continuous (semi-discrete) case, 
	we can show that $ \overline{\nabla} H \big( \ud{m+1}{} , \ud{m}{} \big) \in \Car (D) $: 
	\[ \langle \mathbf{1} , \overline{\nabla} H  \rangle = \langle M^{\top} \mathbf{1} , \overline{\nabla} H \rangle = \langle \mathbf{1} , M \overline{\nabla} H \rangle = \left\langle \mathbf{1} , D  \frac{\ud{m+1}{} - \ud{m}{}}{\Delta t } \right\rangle = 0. \]
	Thus, Proposition~\ref{prop_dg_con} reveals that $ H $ is conserved by the numerical method~\eqref{eq_ad_dg}.
	
	Note that, in general, the implicit constraint is not satisfied, 
	i.e., $ \ud{m}{} \notin \CM $. 
\end{example}

Though the above examples have the desired conservation law, 
the general framework seems to be quite challenging because of the difficulties identified; 
the numerical solution does not belong to $ \CM $.  
Therefore, in the next section, we focus on index-1 cases, 
and construct a general conservative method using the discrete gradient. 
There, in contrast to the above examples, 
we force the numerical solution to belong to $ \CM $ (this can be done thanks to the index-1 assumption). 
This implies the latter difficulty no longer occurs, 
and we can focus on the former issue.

\section{Discrete gradient methods for index-1 DAEs}
\label{sec_dg_index1}

In this section, to overcome the difficulties identified in the previous section, 
we focus on index-1 cases. 
To establish the general result, 
we first consider a new discrete gradient that is compatible with proper conserved quantities. 
We then establish a new discrete gradient method. 

\subsection{A new discrete gradient}

It is important to observe that 
$ \nabla V(\zd{m}{}) , \nabla V(\zd{m+1}{} ) \in \Car (A) $ holds when we assume $ \zd{m}{} , \zd{m+1}{} \in \CM $ for a proper function $V$. 
Therefore,  if we can construct the discrete gradient defined by some interior division of the vectors $ \nabla V(\zd{m}{}) $ and $ \nabla V(\zd{m+1}{}) $, 
the resulting discrete gradient also belongs to $ \Car (A) $. 
However, as far as the present author knows, such a discrete gradient has not been discussed in the literature. 
In this section, we show such a discrete gradient can be constructed for some cases. 

For $V: \RR^d \to \RR $, we define
\begin{equation}
\dgp V (z , z' ) = \begin{cases} \theta ( z , z' ) \nabla V(z ) + \theta (z' , z) \nabla V(z') & (z \neq z' ), \\ \nabla V(z) & (z = z' ) \end{cases} \label{eq_dgp} 
\end{equation}
where the coefficient $ \theta : \RR^d \times \RR^d \to \RR $ is defined by
\begin{equation}\label{eq_dg_coeff}
\theta (z, z' ) = \frac{ V(z) - V(z') - \langle \nabla V(z') , z - z' \rangle }{\langle \nabla V(z) - \nabla V(z') , z -z' \rangle }. 
\end{equation}

First, we confirm the discrete chain rule (recall Definition~\ref{def_dg}): 
	\begin{align*}
	\langle \dgp V(z,z') , z - z' \rangle 
	&= \theta (z , z') \langle \nabla V(z ) , z -z' \rangle  + \theta ( z' , z) \langle \nabla V(z') , z - z' \rangle \\
	&= \frac{V(z) - V(z') }{\langle \nabla V(z) - \nabla V(z') , z - z' \rangle } \left( \langle \nabla V(z ) , z - z' \rangle - \langle \nabla V(z') , z - z' \rangle \right) \\
	& \quad - \frac{ \langle \nabla V(z') , z - z' \rangle 
		\langle \nabla V(z ) , z - z' \rangle +  \langle \nabla V( z ) , z' - z \rangle \langle \nabla V(z') , z - z' \rangle }{\langle \nabla V(z) - \nabla V(z') , z - z' \rangle } \\
	&= V(z) - V(z'). 
	\end{align*}
The second property in Definition~\ref{def_dg} is satisfied by definition, 
and $ \dgp V(z,z') $ is clearly symmetric. 

However, in general, as the denominator of $ \theta (z,z') $ can be zero even when $ z \neq z' $,  
the continuity of $ \dgp V(z,z') $ depends on $V$. 
For example, $ \dgp V $ is a continuous function for quadratic or strictly convex function $V$: 
\begin{itemize}
	\item When $ V $ is quadratic, i.e., $ V(z) = (1/2) z^{\top} X z  $ for a symmetric matrix $ X $:\\
	Because
	\begin{align*}
	\theta (z, z') &= \frac{ \frac{1}{2} ( \langle z , X z \rangle - \langle z' , X z' \rangle ) - \langle X z' , z - z' \rangle  }{\langle X ( z - z' ) , z - z' \rangle }
	= \frac{ \langle  X z , z \rangle - \langle X z' , 2 z - z' \rangle }{2 \langle X ( z - z' ) , z - z' \rangle } \\
	&= \frac{ \langle  X (z - z') ,z - z'  \rangle }{2 \langle X ( z - z' ) , z - z' \rangle } = \frac{1}{2},
	\end{align*}
	$ \dgp V(z,z') = B ( z + z')/2 $ holds, which coincides with the average vector field. 
	\item When $ V $ is strictly convex:\\
	The function $V$ is said to be strictly convex if 
	$ V( (1-\xi) z + \xi z' ) < ( 1 - \xi ) V(z) + \xi V(z') $
	holds for any $ z \neq z' $ and $ \xi \in (0,1) $. 
	In this case, since 
	$ V(z) > V(z') + \langle \nabla V(z') , z - z' \rangle $
	holds for any $ z \neq z' $, the denominator of $ \theta (z,z') $ can be estimated by 
	\begin{align*}
	\langle \nabla V(z) - \nabla V(z') , z - z' \rangle 
	&= \langle \nabla V(z) , z - z' \rangle + \langle \nabla V(z') , z' - z \rangle \\
	&> \left( V(z) - V(z') \right) + \left( V(z') - V(z) \right) = 0. 
	\end{align*}
	Therefore, it is positive and accordingly continuous on the domain $ \{ (z,z') \mid z \neq z' \} $. 
	The continuity on the whole domain can also be proved (see appendix). 
\end{itemize}

As the discrete gradient $ \dgp V $ is defined by the interior division, 
the following lemma clearly holds. 

\begin{lemma}\label{lem_dg_proper}
	Let $ z , z' \in \RR^d $ be elements of $ \CM $, and $V$ be a proper function. 
	Then, $ \dgp V(z,z') \in \Car (A) $ holds. 
\end{lemma}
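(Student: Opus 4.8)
The plan is to observe that Lemma~\ref{lem_dg_proper} is essentially immediate once we recognise that $\Car(A)$ is a linear subspace and that $\dgp V(z,z')$ is, by construction, an affine combination of $\nabla V(z)$ and $\nabla V(z')$ with coefficients summing to $1$. First I would treat the trivial case $z = z'$: then $\dgp V(z,z') = \nabla V(z)$, and since $z \in \CM$ and $V$ is proper, $\nabla V(z) \in \Car(A)$ by the definition of properness. So the substantive case is $z \neq z'$.

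For $z \neq z'$, the key step is to note that $\dgp V(z,z') = \theta(z,z')\,\nabla V(z) + \theta(z',z)\,\nabla V(z')$, and then to verify that $\theta(z,z') + \theta(z',z) = 1$. This follows by a short computation directly from the definition~\eqref{eq_dg_coeff}: the two numerators add up to
$\bigl(V(z) - V(z') - \langle \nabla V(z'), z - z'\rangle\bigr) + \bigl(V(z') - V(z) - \langle \nabla V(z), z' - z\rangle\bigr)
= \langle \nabla V(z) - \nabla V(z'), z - z'\rangle$,
which is exactly the common denominator; hence the sum of the coefficients is $1$. (This is the same cancellation already implicit in the verification of the discrete chain rule above, so I may simply cite that.)

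Then, since $z, z' \in \CM$ and $V$ is proper, both $\nabla V(z) \in \Car(A)$ and $\nabla V(z') \in \Car(A)$. Because $\Car(A) = \Null(A)^{\perp}$ is a linear subspace, it is closed under arbitrary linear combinations, in particular under the affine combination $\theta(z,z')\nabla V(z) + (1-\theta(z,z'))\nabla V(z')$. Therefore $\dgp V(z,z') \in \Car(A)$, which completes the proof.

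I do not expect any genuine obstacle here: the only thing to be slightly careful about is that the statement presupposes $\dgp V$ is well-defined at $(z,z')$, i.e.\ that the denominator of $\theta$ does not vanish — but this is a standing assumption for the objects under discussion (the preceding paragraphs already flag that continuity/well-definedness of $\dgp V$ depends on $V$, and the lemma is applied in contexts, such as quadratic or strictly convex $V$, where it holds). So the proof is a two-line argument: the coefficients sum to one, and a linear subspace is closed under affine combinations of its elements.
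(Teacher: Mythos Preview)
Your proof is correct and matches the paper's own justification, which simply notes that $\dgp V$ is defined as an ``interior division'' of $\nabla V(z)$ and $\nabla V(z')$ and hence obviously lies in $\Car(A)$. One small remark: since $\Car(A)$ is a \emph{linear} subspace (not merely affine), the verification that $\theta(z,z')+\theta(z',z)=1$ is unnecessary---any linear combination of $\nabla V(z),\nabla V(z')\in\Car(A)$ already lies in $\Car(A)$.
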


\subsection{Discrete gradient method}

To ensure $ \zd{m}{} \in \CM $, we consider a reformulation of the linear gradient DAE~\eqref{eq_lgdae} in the following form: 
\begin{equation}\label{eq_dae_red}
\begin{cases}
A \dot{z} = S (z) \nabla V(z) + \sum_{i=1}^{\ell} c_i b_i, \\
G(z) = 0,
\end{cases}
\end{equation} 
where $ c_i \in \RR $ is a redundant variable (which turns out to be identically zero) for each $  i = 1, \dots , \ell$, 
and $ G(z) = B^{\top} S(z) \nabla V(z) $ is an implicit constraint. 

We consider the following one-step method using the discrete gradient~$ \dgp V $ introduced in the previous section: 
\begin{equation}\label{eq_dg_red}
\begin{cases}
A \frac{\zd{m+1}{} - \zd{m}{} }{\Delta t } = \overline{S} \big( \zd{m+1}{} , \zd{m}{} \big) \dgp V \big( \zd{m+1}{} , \zd{m}{} \big) + \sum_{i=1}^{\ell} c_i^{(m+1)} b_i, \\
G \big( \zd{m+1}{} \big) = 0, 
\end{cases}
\end{equation}
where $ S(\zd{m+1}{} , \zd{m}{} ) = (S( \zd{m+1}{} ) + S(\zd{m}{} ) )/2 $. 
Here, we assume that the scheme~\eqref{eq_dg_red} has a unique solution for sufficiently small $ \Delta t $. 
By definition, the following lemma obviously holds. 

\begin{lemma}\label{lem_dg_ic}
	Let $ \zd{m}{} $ be the numerical solution of~\eqref{eq_dg_red} ($m = 0, 1,\dots $). 
	Then, $ \zd{m}{} \in \CM $ holds for any $m$. 
\end{lemma}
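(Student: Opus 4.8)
The plan is to argue by induction on $m$, using the structure of the scheme~\eqref{eq_dg_red} together with the index-1 characterization $\CM = \CM_B$ recalled in Section~\ref{subsec_pre_dae}. The base case is immediate: the initial value $\zd{0}{}$ is assumed to lie in $\CM$. For the inductive step, suppose $\zd{m}{} \in \CM$ and let $\zd{m+1}{}$ be the numerical solution of~\eqref{eq_dg_red}. The second equation of~\eqref{eq_dg_red} directly asserts $G\big(\zd{m+1}{}\big) = 0$, i.e. $B^{\top} S\big(\zd{m+1}{}\big)\nabla V\big(\zd{m+1}{}\big) = 0$. The point is that this is precisely the implicit constraint~\eqref{eq_dae_ic} for the linear gradient DAE~\eqref{eq_lgdae}, since the right-hand side of~\eqref{eq_lgdae} is $f(z) = S(z)\nabla V(z)$ and $\CM_B = \{ z \mid B^{\top} f(z) = 0\}$.

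First I would make explicit that $\zd{m+1}{} \in \CM_B$ by this observation. Then I would invoke the index-1 assumption in force throughout Section~\ref{sec_dg_index1}, which (as noted in Section~\ref{subsec_pre_dae}) is equivalent to $\CM = \CM_B$; hence $\zd{m+1}{} \in \CM$, completing the induction. A subtlety worth a sentence: the first equation of~\eqref{eq_dg_red} is what \emph{forces} the redundant variables $c_i^{(m+1)}$ to be determined, and one should note that consistency of the scheme is only being used insofar as a solution is assumed to exist (this is an explicit hypothesis right before the lemma); the lemma itself does not need uniqueness, only that whatever solution is produced satisfies the stated second equation by construction.

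I do not expect any real obstacle here — the lemma is essentially a restatement of the design principle behind the reformulation~\eqref{eq_dae_red}, namely that appending the implicit constraint $G(z) = 0$ as an explicit algebraic equation, in the index-1 setting where $\CM = \CM_B$, pins the numerical iterates to the configuration manifold. The only thing to be careful about is to cite the correct equivalence ``uniform index-1 $\iff$ $\CM = \CM_B$'' from Section~\ref{subsec_pre_dae} rather than re-deriving it, and to make sure the induction is phrased so that $\zd{0}{} \in \CM$ is invoked as the hypothesis on the initial data. The whole argument is two or three lines once these pieces are lined up.
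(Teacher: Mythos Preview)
Your proposal is correct and matches the paper's one-line argument: uniform index-1 means $\CM = \{z \mid G(z)=0\}$, and the second equation of~\eqref{eq_dg_red} enforces $G(\zd{m+1}{})=0$ directly. Note only that your induction is superfluous --- the inductive hypothesis $\zd{m}{}\in\CM$ is never actually used, since the constraint on $\zd{m+1}{}$ is imposed by the scheme regardless of where $\zd{m}{}$ sits; only the base case $\zd{0}{}\in\CM$ requires a separate assumption on the initial data.
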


\begin{proof}
	As we assume the DAE has a uniform index-1, $ \CM = \{ z \in \RR^d \mid G(z) = 0 \} $ holds. 
	This fact implies the lemma. 
\end{proof}

Using Lemmas~\ref{lem_dg_proper} and \ref{lem_dg_ic}, 
we obtain the desired discrete conservation law as follows. 

\begin{theorem}
	Let $ \zd{m}{} $ be the numerical solution of~\eqref{eq_dg_red} ($m = 0, 1,\dots $). 
	Then, $ V(\zd{m+1}{}) = V(\zd{m}{} ) $ holds for any $ m $. 
\end{theorem}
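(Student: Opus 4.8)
The plan is to reproduce the mechanism behind Proposition~\ref{prop_dg_con} for the reformulated scheme~\eqref{eq_dg_red}, the two new ingredients being Lemma~\ref{lem_dg_ic} (which forces the iterates onto $\CM$) and Lemma~\ref{lem_dg_proper} (which then places the discrete gradient in $\Car(A)$). Throughout I abbreviate $\xi := \dgp V\big(\zd{m+1}{},\zd{m}{}\big)$ and $w := \big(\zd{m+1}{}-\zd{m}{}\big)/\Delta t$.

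First I would collect the structural facts. By Lemma~\ref{lem_dg_ic}, $\zd{m}{},\zd{m+1}{}\in\CM$; since $V$ is proper, Lemma~\ref{lem_dg_proper} then gives $\xi\in\Car(A)=\Range(A^{\dagger})$, so that $A^{\dagger}A\,\xi=\xi$ because $A^{\dagger}A$ is the orthogonal projector onto $\Range(A^{\dagger})$. Moreover each $b_i$ lies in $\Range(A)^{\perp}=\Null(A^{\dagger})$, hence $A^{\dagger}b_i=0$, so that applying $A^{\dagger}$ to the first line of~\eqref{eq_dg_red} annihilates the redundant terms and leaves $A^{\dagger}A\,w=A^{\dagger}\overline{S}\big(\zd{m+1}{},\zd{m}{}\big)\,\xi$.

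Next I would evaluate $\langle\xi,w\rangle$. Using $\xi=A^{\dagger}A\,\xi$ together with the symmetry $(A^{\dagger}A)^{\top}=A^{\dagger}A$, one gets $\langle\xi,w\rangle=\langle A^{\dagger}A\,\xi,w\rangle=\langle\xi,A^{\dagger}A\,w\rangle=\big\langle\xi,A^{\dagger}\overline{S}\big(\zd{m+1}{},\zd{m}{}\big)\,\xi\big\rangle$. Since $\zd{m+1}{},\zd{m}{}\in\CM$ and $A^{\dagger}S(z)$ is skew-symmetric for every $z\in\CM$ (we are in the conservative case), $A^{\dagger}S\big(\zd{m+1}{}\big)$ and $A^{\dagger}S\big(\zd{m}{}\big)$ are both skew-symmetric, hence so is their average $A^{\dagger}\overline{S}\big(\zd{m+1}{},\zd{m}{}\big)$, and the last inner product vanishes; thus $\langle\xi,w\rangle=0$. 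Finally the discrete chain rule for $\dgp V$ verified just after~\eqref{eq_dg_coeff} gives $V\big(\zd{m+1}{}\big)-V\big(\zd{m}{}\big)=\big\langle\xi,\zd{m+1}{}-\zd{m}{}\big\rangle=\Delta t\,\langle\xi,w\rangle=0$, which is exactly the assertion.

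I do not anticipate a real difficulty: once Lemmas~\ref{lem_dg_proper} and~\ref{lem_dg_ic} are available, the rest is bookkeeping with the Moore--Penrose inverse. The only points needing care are that the redundant correction terms $c_i^{(m+1)}b_i$ disappear under $A^{\dagger}$ — so one never has to show $c_i^{(m+1)}=0$ separately — that the projector $A^{\dagger}A$ may be transferred from $\xi$ onto $w$, and the tacit assumption that $\dgp V$ is well defined (continuous) along the trajectory, which holds for quadratic or strictly convex $V$ as discussed before Lemma~\ref{lem_dg_proper}.
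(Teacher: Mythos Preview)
Your argument is correct and follows essentially the same route as the paper's proof: both apply $A^{\dagger}$ to the first line of~\eqref{eq_dg_red} to kill the $b_i$-terms, use Lemmas~\ref{lem_dg_ic} and~\ref{lem_dg_proper} to place $\dgp V$ in $\Car(A)$, and conclude by the skew-symmetry of $A^{\dagger}\overline{S}$ together with the discrete chain rule. The only cosmetic difference is that the paper writes $w=A^{\dagger}\overline{S}\,\xi+e^{(m+1)}$ with $e^{(m+1)}\in\Null(A)$ and then uses $\xi\perp e^{(m+1)}$, whereas you transfer the projector $A^{\dagger}A$ from $\xi$ onto $w$; these are equivalent formulations of the same computation.
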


\begin{proof}
	For the numerical solution $ \zd{m}{} $, 
	there exists some $ e^{(m+1)} \in \Null (A) $ such that 
	\[ \frac{\zd{m+1}{} - \zd{m}{} }{\Delta t } = A^{\dagger} \overline{S} \big( \zd{m+1}{} , \zd{m}{} \big) \dgp V \big( \zd{m+1}{} , \zd{m}{} \big) + e^{(m+1)} \]
	(note that $ A^{\dagger} b_i = 0 $ holds because $ \Null (A^{\dagger} ) = ( \Range (A) )^{\perp} $). 
	Therefore, we see that
	\begin{align*}
	\frac{V(\zd{m+1}{} ) - V(\zd{m}{} ) }{\Delta t}
	&= \left\langle \dgp V(\zd{m+1}{} ,\zd{m}{} ) , \frac{\zd{m+1}{}  - \zd{m}{}}{\Delta t} \right\rangle \\
	&= \left\langle \dgp V(\zd{m+1}{} ,\zd{m}{} ) , A^{\dagger} \overline{S} \big( \zd{m+1}{} , \zd{m}{} \big) \dgp V \big( \zd{m+1}{} , \zd{m}{} \big) + e^{(m+1)} \right\rangle \\
	&= \left\langle \dgp V(\zd{m+1}{} ,\zd{m}{} ) , A^{\dagger} \overline{S} \big( \zd{m+1}{} , \zd{m}{} \big) \dgp V \big( \zd{m+1}{} , \zd{m}{} \big) \right\rangle \\
	&= 0,
\end{align*}
	where the last equality comes from the skew-symmetry of $ A^{\dagger} \overline{S} (\zd{m+1}{} , \zd{m}{} ) $ (recall that the sum of skew-symmetric matrices is again skew-symmetric), 
	and the third equality comes from $ e^{(m+1)} \in \Null (A) $ and $ \dgp V(\zd{m+1}{} ,\zd{m}{} ) \in \Car (A) $. 
	Note that $ \dgp V(\zd{m+1}{} ,\zd{m}{} ) \in \Car (A) $ is ensured by $ \zd{m+1}{} , \zd{m}{} \in \CM $ and Lemma~\ref{lem_dg_proper}. 
\end{proof}

\section{Numerical example}
\label{sec_num}


In this section, we examine the case of the sinh-Gordon equation 
\begin{equation}\label{eq_shG}
u_{tx} = \sinh u,
\end{equation}
which is a special case of~\eqref{eq_utx} with $ \mathcal{H} (u) = \int_{\Sb} \cosh u \, \rd x $. 
The solution of the sinh-Gordon equation must satisfy the implicit constraint $ \mathcal{F} (u) = \int_{\Sb} \sinh u \, \rd x = 0 $ (see \cite{SM2017arx} for details on such implicit constraints).

In this case, the discrete energy can be defined as $ H (u) = \sum_{i=1}^I \cosh (u_i) $ 
(though $ \overline{\mathcal{H}} (u) := H(u) \Delta x $ is an appropriate discretization of $\mathcal{H}$, 
$H$ is employed to ensure that $ \nabla H $ is an approximation of $ \delta \mathcal{H} / \delta u $; see \cite{CGMMOOQ2012} for this point). 
Then, because $ H $ is strictly convex, the discrete gradient $ \dgp H $ must be a continuous function. 
Using this fact, the discrete gradient method 
\begin{equation}\label{eq_shg_dvdm_prop}
\begin{cases}
D \frac{\ud{m+1}{} - \ud{m}{} }{\Delta t } = M \dgp H \big( \ud{m+1}{} , \ud{m}{} \big) + c^{(m+1)} \mathbf{1}, \\
F(\ud{m+1}{} ) := \sum_{i=1}^I \sinh ( \ud{m+1}{i} ) = 0, 
\end{cases}
\end{equation}
can be defined according to~\eqref{eq_dg_red}. 
In this case, multiplying by $ \mathbf{1}^{\top} $, the first equation implies that
\[ 0 = \mathbf{1}^{\top} D \frac{\ud{m+1}{} - \ud{m}{} }{\Delta t } = \mathbf{1}^{\top} \dgp H \big( \ud{m+1}{} , \ud{m}{} \big) + I c^{(m+1)} = I c^{(m+1)}, \]
i.e., $ c^{(m+1)} = 0 $ holds, 
whereas the last equality comes from $ F ( \ud{m}{} ) = F(\ud{m+1}{} ) = 0 $. 
Therefore, the simple discrete gradient method 
\begin{equation}\label{eq_shg_dvdm_r}
D \frac{\ud{m+1}{} - \ud{m}{} }{\Delta t } = M \dgp H \big( \ud{m+1}{} , \ud{m}{} \big) 
\end{equation}
turns out to be mathematically equivalent to~\eqref{eq_shg_dvdm_prop} 
as long as we assume $ \zd{0}{} \in \CM $, i.e., $ F ( \ud{0}{} ) = 0 $. 
Therefore, this scheme (using $ \dgp H $ for the usual discrete gradient scheme~\eqref{eq_ad_dg}) actually conserves $F$ and $H$. 
However, of course, if we employ the average vector field as a discrete gradient, i.e., we consider
\begin{equation}\label{eq_shg_dvdm_avf}
D \frac{\ud{m+1}{} - \ud{m}{} }{\Delta t } = M \overline{\nabla}_{\mathrm{AVF}} H \big( \ud{m+1}{} , \ud{m}{} \big) , 
\end{equation}
the scheme only conserves $ H $. 

To compare these approaches numerically, we use the periodic traveling wave solution discovered by Li--Yin~\cite{LY2017_1}. 
We omit the concrete form of the solution, but its shape is shown in Fig.~\ref{fig_ShG_initial}. 
To solve the nonlinear equations, we simply employ the `fsolve' function in MATLAB. 
We set the parameters to $ I = 128 $, $ \Delta t = 0.1 $, $ T = 10 $, and $ L \approx 5.91 $ (the period $L$ is numerically computed because of the cumbersome definition of the exact solution). 
As shown in Fig.~\ref{fig_ShG_ns}, the nonlinear equation~\eqref{eq_shg_dvdm_prop} is solved very accurately (recall that $ c^{(m)} = 0 $ is the exact solution), 
and other nonlinear equations are also solved well.

\begin{figure}[htp]
\centering
\begin{minipage}{0.48\textwidth}
	\centering
	\pgfplotsset{width=7cm,compat=newest}
	\begin{tikzpicture}
	\begin{axis}[xlabel = {$x$}, enlarge x limits = false, ylabel= {$u$}]
	\addplot[black] table[x index = 0, y index = 2] {data/ShGDVDM0.dat};
	\end{axis}
	\end{tikzpicture}
	\caption{Initial condition; traveling wave. }
	\label{fig_ShG_initial}
\end{minipage}
\hspace{5pt}
\begin{minipage}{0.48\textwidth}
	\centering
	\begin{tikzpicture}
	\begin{semilogyaxis}[width=7cm,compat = newest,xlabel={$t$},enlarge x limits=false, 
	ylabel ={$ c^{(m)} $}]
	\addplot[smooth,red] table {data/ShGDVDMns.dat};
	\end{semilogyaxis}
	\end{tikzpicture}
	\caption{Evolution of $ c^{(m)} $ in scheme~\eqref{eq_shg_dvdm_prop}.}
	\label{fig_ShG_ns}
\end{minipage}
\vspace{5pt}

\begin{minipage}{0.48\textwidth}
	\centering
	\begin{tikzpicture}
	\begin{semilogyaxis}[width=7cm,compat = newest,xlabel={$t$},enlarge x limits=false, 
	ylabel ={error}]
	\addplot[smooth,red] table {data/ShGDVDMenerror.dat};
	\addplot[smooth,blue] table {data/ShGDVDMHenerror.dat};
	\addplot[smooth,green] table {data/ShGDVDMRenerror.dat};
	\end{semilogyaxis}
	\end{tikzpicture}
	\caption{Evolution of errors in the energy $ \overline{\mathcal{H}} = H \Delta x $ for each numerical solution. Solid lines in red, green, and blue correspond to schemes~\eqref{eq_shg_dvdm_prop}, \eqref{eq_shg_dvdm_r}, and \eqref{eq_shg_dvdm_avf}, respectively}
	\label{fig_ShG_en}
\end{minipage}
\hspace{5pt}
\begin{minipage}{0.48\textwidth}
	\centering
	\begin{tikzpicture}
	\begin{semilogyaxis}[width=7cm,compat = newest,xlabel={$t$},enlarge x limits=false, 
	ylabel ={error}]
	\addplot[smooth,red] table {data/ShGDVDMic.dat};
	\addplot[smooth,blue] table {data/ShGDVDMHic.dat};
	\addplot[smooth,green] table {data/ShGDVDMRic.dat};
	\end{semilogyaxis}
	\end{tikzpicture}
	\caption{Evolution of errors in the implicit constraint $ \overline{\mathcal{F}} = F \Delta x $ for each numerical solution. Solid lines in red, green, and blue correspond to schemes~\eqref{eq_shg_dvdm_prop}, \eqref{eq_shg_dvdm_r}, and \eqref{eq_shg_dvdm_avf}, respectively. }
	\label{fig_ShG_ic}
\end{minipage}
\end{figure}

Schemes~\eqref{eq_shg_dvdm_prop}, \eqref{eq_shg_dvdm_r}, and~\eqref{eq_shg_dvdm_avf} all reproduce the traveling wave perfectly (omitted), 
and conserve the energy~$ \mathcal{H} $ very well (see Fig.~\ref{fig_ShG_en}). 
However, the average vector field~\eqref{eq_shg_dvdm_avf} is slightly worse in view of the implicit constraint 
(Fig.~\ref{fig_ShG_ic}). 
This result agrees very well with the theory described above. 

Note that, in view of computational efficiency, 
the average vector field~\eqref{eq_shg_dvdm_avf} is the best scheme because it is free of nonlocal computations (see Table~\ref{tab_ct}).
Recall that the calculation of $ \dgp H (\ud{m+1}{} , \ud{m}{} )  $ requires nonlocal computations (see definition of $\theta $ in~\eqref{eq_dg_coeff}). 
The actual computation times are summarized in Table~\ref{tab_ct}. 
Though not especially meaningful in this quite simple implementation, 
the results imply that scheme~\eqref{eq_dg_red} is computationally expensive. 

Though schemes~\eqref{eq_shg_dvdm_prop} and~\eqref{eq_shg_dvdm_r} are mathematically equivalent, 
there are some visible differences between them in Figures~\ref{fig_ShG_en} and \ref{fig_ShG_ic}. 
The cause of this phenomenon is the very strong assumption that $ F(\zd{0}{} ) = 0 $ for the equivalence. 
In fact, this assumption cannot be ensured exactly in actual numerical computations. 
Moreover, even when it is ensured, $ F ( \zd{m}{} ) = 0 $ does not exactly hold because of the inevitable round-off error for $ m \ge 1 $. 

\begin{table}[htp]
	\centering
	\caption{Computation time for each numerical scheme. }
	\label{tab_ct}
	\begin{tabular}{c||c|c|c}
		\hline
		Scheme & \eqref{eq_shg_dvdm_prop} & \eqref{eq_shg_dvdm_r} & \eqref{eq_shg_dvdm_avf} \\ \hline 
		Time (s) & 7.6292 & 6.1884 & 4.3392 \\ \hline
	\end{tabular}
\end{table}

\section{Concluding remarks}
\label{sec_cr}

In this paper, we have presented the first steps toward a unified framework for the discrete gradient method applied to DAEs. 

As the first building block of such a framework, 
we showed that the linear gradient DAE~\eqref{eq_lgdae} is an appropriate class for considering discrete gradient methods (Section~\ref{sec_lg}). 
To overcome several difficulties, we introduced the concept of proper functions. 
Though unusual at first sight, 
we believe the concept of properness is indispensable in considering conservation/dissipation laws in DAEs. 

We then discussed the difficulty of constructing conservative methods for general conservative DAEs 
(Section~\ref{sec_dg}). 
Finally, we derived a partial answer for index-1 cases in Section~\ref{sec_dg_index1}, 
and confirmed this result numerically in Section~\ref{sec_num}. 

However, several issues remain. 
First, higher index cases seem to be quite difficult to deal with (Section~\ref{sec_dg}), 
but should be investigated. 
Second, as there are numerous equivalent functions (i.e., whose values are the same on $ \CM $), 
determining the best one is an interesting issue 
(recall that the augmented Hamiltonian yields the better linear gradient structure (Example~\ref{ex_hs_lg})). 
Finally, the author believes that 
the discrete gradient scheme defined in Section~\ref{sec_dg_index1}
should be investigated in more detail, 
because it simultaneously preserves the conserved quantity and the implicit constraint. 
Since the preservation of multiple conserved quantities has presented a difficult task previous studies, 
the author hopes that this contribution will provide some progress in this direction.

\section*{Acknowledgment}
	The author is grateful to Kensuke Aishima and Takayasu Matsuo for valuable comments.


\begin{thebibliography}{10}
	
	\bibitem{Ascher-Petzold1998}
	U.~M. Ascher and L.~R. Petzold.
	\newblock {\em Computer methods for ordinary differential equations and
		differential-algebraic equations}.
	\newblock Society for Industrial and Applied Mathematics (SIAM), Philadelphia,
	PA, 1998.
	
	\bibitem{B1987}
	V.~B. Baji\'c.
	\newblock Lyapunov function candidates for semistate systems.
	\newblock {\em Internat. J. Control}, 46(6):2171--2181, 1987.
	
	\bibitem{IsraelGreville2003}
	A.~Ben-Israel and T.~N.~E. Greville.
	\newblock {\em Generalized inverses}.
	\newblock CMS Books in Mathematics/Ouvrages de Math\'ematiques de la SMC, 15.
	Springer-Verlag, New York, second edition, 2003. 
	
	\bibitem{Be2006}
	P.~Betsch.
	\newblock Energy-consistent numerical integration of mechanical systems with
	mixed holonomic and nonholonomic constraints.
	\newblock {\em Comput. Methods Appl. Mech. Engrg.}, 195(50-51):7020--7035,
	2006.
	
	\bibitem{Bloch2015}
	A.~M. Bloch.
	\newblock {\em Nonholonomic mechanics and control}, volume~24 of {\em
		Interdisciplinary Applied Mathematics}.
	\newblock Springer, New York, second edition, 2015.
	
	\bibitem{Burger-Gerdts2017}
	M.~Burger and M.~Gerdts.
	\newblock A survey on numerical methods for the simulation of initial value
	problems with s{DAE}s.
	\newblock In A. Ilchmann and T. Reis (eds.) {\em Surveys in
		Differential-Algebraic Equations IV}, pp. 221--300. Springer International
	Publishing, Cham, 2017.
	
	\bibitem{CEOR2018arx}
	E.~Celledoni, S.~Eidnes, B.~Owren, and T.~Ringholm.
	\newblock Dissipative schemes on {R}iemannian manifolds.
	\newblock {\em arXiv e-prints}, arXiv:1804.08104, 2018.
	
	\bibitem{CGMMOOQ2012}
	E.~Celledoni, V.~Grimm, R.I .McLachlan, D.~McLaren, D.~O'Neale, B.~Owren, and
	G.R. Quispel.
	\newblock Preserving energy resp. dissipation in numerical {PDE}s using the
	``average vector field'' method.
	\newblock {\em J. Comp. Phys.}, 231:6770--6789, 2012.
	
	\bibitem{CO2014}
	E.~Celledoni and B.~Owren.
	\newblock Preserving first integrals with symmetric {L}ie group methods.
	\newblock {\em Discrete Contin. Dyn. Syst.}, 34(3):977--990, 2014.
	
	\bibitem{F1999}
	D.~Furihata.
	\newblock Finite difference schemes for {$\partial u/\partial
		t=(\partial/\partial x)^\alpha\delta G/\delta u$} that inherit energy
	conservation or dissipation property.
	\newblock {\em J. Comput. Phys.}, 156(1):181--205, 1999.
	
	\bibitem{FM2011}
	D.~Furihata and T.~Matsuo.
	\newblock {\em Discrete variational derivative method--A structure-preserving
		numerical method for partial differential equations}.
	\newblock CRC Press, Boca Raton, 2011.
	
	\bibitem{FM1998}
	D.~Furihata and M.~Mori.
	\newblock General derivation of finite difference schemes by means of a
	discrete variation (in {J}apanese).
	\newblock {\em Trans. Japan Soc. Indust. Appl.}, 8(3):317--340, 1998.
	
	\bibitem{FSM2016}
	D.~Furihata, S.~Sato, and T.~Matsuo.
	\newblock A novel discrete variational derivative method using
	``average-difference methods''.
	\newblock {\em JSIAM Lett.}, 8:81--84, 2016.
	
	\bibitem{G1996}
	O.~Gonzalez.
	\newblock Time integration and discrete {H}amiltonian systems.
	\newblock {\em J. Nonlinear Sci.}, 6:449--467, 1996.
	
	\bibitem{G1999}
	O.~Gonzalez.
	\newblock Mechanical systems subject to holonomic constraints:
	differential-algebraic formulations and conservative integration.
	\newblock {\em Phys. D}, 132(1-2):165--174, 1999.
	
	\bibitem{HLW2010}
	E.~Hairer, C.~Lubich, and G.~Wanner.
	\newblock {\em Geometric numerical integration, Structure-preserving algorithms
		for ordinary differential equations}, volume~31 of {\em Springer Series in
		Computational Mathematics}.
	\newblock Springer, Heidelberg, 2010.
	
	\bibitem{IY2016}
	A.~Ishikawa and T. Yaguchi.
	\newblock Application of the variational principle to deriving
	energy-preserving schemes for the {H}amilton equation.
	\newblock {\em JSIAM Lett.}, 8:53--56, 2016.
	
	\bibitem{IA1988}
	T.~Itoh and K.~Abe.
	\newblock Hamiltonian-conserving discrete canonical equations based on
	variational difference quotients.
	\newblock {\em J. Comput. Phys.}, 76(1):85--102, 1988.
	
	\bibitem{KR2011}
	P.~E. Kloeden and M.~Rasmussen.
	\newblock {\em Nonautonomous Dynamical Systems}.
	\newblock American Mathematical Society, Rhode Island, 2011.
	
	\bibitem{K2016}
	H.~Kojima.
	\newblock Invariants preserving schemes based on explicit {R}unge-{K}utta
	methods.
	\newblock {\em BIT}, 56(4):1317--1337, 2016.
	
	\bibitem{LY2017_1}
	M.~Li and Z.~Yin.
	\newblock Blow-up phenomena and travelling wave solutions to the periodic
	integrable dispersive {H}unter-{S}axton equation.
	\newblock {\em Discrete Contin. Dyn. Syst.}, 37(12):6471--6485, 2017.
	
	\bibitem{LT2012}
	D. Liberzon and S. Trenn.
	\newblock Switched nonlinear differential algebraic equations: solution theory,
	{L}yapunov functions, and stability.
	\newblock {\em Automatica J. IFAC}, 48(5):954--963, 2012.
	
	\bibitem{MQR1999}
	R.~I. McLachlan, G.~R.~W. Quispel, and N.~Robidoux.
	\newblock Geometric integration using discrete gradients.
	\newblock {\em Philos. Trans. R. Soc. Lond. A Math. Phys. Eng. Sci.},
	357:1021--1045, 1999.
	
	\bibitem{MCFM2017}
	Y.~Miyatake, D.~Cohen, D.~Furihata, and T.~Matsuo.
	\newblock Geometric numerical integrators for {H}unter-{S}axton-like equations.
	\newblock {\em Jpn. J. Ind. Appl. Math.}, 34(2):441--472, 2017.
	
	\bibitem{MYM2012}
	Y.~Miyatake, T.~Yaguchi, and T.~Matsuo.
	\newblock Numerical integration of the {O}strovsky equation based on its
	geometric structures.
	\newblock {\em J. Comput. Phys.}, 231(14):4542--4559, 2012.
	
	\bibitem{Olver1995}
	P.~J. Olver.
	\newblock {\em Equivalence, invariants, and symmetry}.
	\newblock Cambridge University Press, Cambridge, 1995.
	
	\bibitem{QC1996}
	G.~R.~W. Quispel and H.~W. Capel.
	\newblock Solving {ODE}'s numerically while preserving a first integral.
	\newblock {\em Phys. Lett.}, 218A:223--228, 1996.
	
	\bibitem{QM2008}
	G.~R.~W. Quispel and D.~I. McLaren.
	\newblock A new class of energy-preserving numerical integration methods.
	\newblock {\em J. Phys. A.: Math. Theor.}, 41:045206, 2008.
	
	\bibitem{QT1996}
	G.~R.~W. Quispel and G.~S. Turner.
	\newblock Discrete gradient methods for solving {ODE}'s numerically while
	preserving a first integral.
	\newblock {\em J. Phys. A}, 29:L341--349, 1996.
	
	\bibitem{R1990}
	S.~Reich.
	\newblock On a geometrical interpretation of differential-algebraic equations.
	\newblock {\em Circuits Systems Signal Process.}, 9(4):367--382, 1990.
	
	\bibitem{Robinson}
	J.~C. Robinson.
	\newblock {\em Infinite-Dimensional Dynamical Systems}.
	\newblock Cambridge University Press, Cambridge, 2001.
	
	\bibitem{S2018+}
	S.~Sato.
	\newblock Stability and convergence of a conservative finite difference scheme
	for the modified {H}unter--{S}axton equation
	\newblock {\em arXiv e-prints}, arXiv:1802.03539, 2018.
	
	\bibitem{SM2017arx}
	S.~Sato and T.~Matsuo.
	\newblock On spatial discretization of evolutionary differential equations on
	the periodic domain with a mixed derivative.
	\newblock {\em arXiv e-prints}, arXiv:1704.03645v2, 2017.
	
	\bibitem{SMSF2015}
	S.~Sato, T.~Matsuo, H.~Suzuki, and D.~Furihata.
	\newblock A {L}yapunov-type theorem for dissipative numerical integrators with
	adaptive time-stepping.
	\newblock {\em SIAM J. Numer. Anal.}, 53(6):2505--2518, 2015.
	
	\bibitem{UB2010}
	S.~Uhlar and P.~Betsch.
	\newblock On the derivation of energy consistent time stepping schemes for
	friction afflicted multibody systems.
	\newblock {\em Computers \& Structures}, 88(11):737--754, 2010.
	
	\bibitem{V2002}
	M.~Vidyasagar.
	\newblock {\em Nonlinear systems analysis}, volume~42 of {\em Classics in
		Applied Mathematics}.
	\newblock Society for Industrial and Applied Mathematics (SIAM), Philadelphia,
	PA, 2002.
	
	\bibitem{WBN2016}
	A.~T.~S. Wan, A.~Bihlo, and J.-C. Nave.
	\newblock The multiplier method to construct conservative finite difference
	schemes for ordinary and partial differential equations.
	\newblock {\em SIAM J. Numer. Anal.}, 54(1):86--119, 2016.
	
	\bibitem{WBN2017}
	A. T.~S. Wan, A. Bihlo, and J.-C. Nave.
	\newblock Conservative methods for dynamical systems.
	\newblock {\em SIAM J. Numer. Anal.}, 55(5):2255--2285, 2017.
	
	\bibitem{WN2016arx}
	A.~T.~S. {Wan} and J.-C. {Nave}.
	\newblock {On the arbitrarily long-term stability of conservative methods}.
	\newblock {\em arXiv e-prints}, arXiv:1607.06160, 2016.
\end{thebibliography}

\section*{Appendix}
\appendix

\section{Continuity of $ \dgp V $ for strictly convex $ V$}

To demonstrate the continuity of $ \dgp V $ around $ z = z' $, 
we fix the point $ z^{\ast} \in \RR^d $ and set $ z = z^{\ast} + \epsilon $ and $ z' = z^{\ast} $. 
Then, we consider the Taylor expansion of 
the denominator and numerator of $ \theta (z^{\ast} + \epsilon , z^{\ast} ) $ with respect to $ \epsilon $. 

The numerator and denominator can be expanded as
\begin{align*}
 V(z^{\ast} + \epsilon) - V(z^{\ast} ) - \langle \nabla V(z^{\ast}) , \epsilon \rangle 
 &= \frac{1}{2} \sum_{i,j} \frac{\partial^2 V}{ \partial z_i \partial z_j } \epsilon_i \epsilon_j + \frac{1}{6} \sum_{i,j,k} \frac{\partial^3 V }{ \partial z_i \partial z_j \partial z_k} \epsilon_i \epsilon_j \epsilon_k + \cdots, \\
 \langle \nabla V(z^{\ast} + \epsilon) - \nabla V(z^{\ast}) , \epsilon \rangle 
 &= \sum_{i,j} \frac{\partial^2 V}{ \partial z_i \partial z_j } \epsilon_i \epsilon_j + \frac{1}{2} \sum_{i,j,k} \frac{\partial^3 V }{ \partial z_i \partial z_j \partial z_k} \epsilon_i \epsilon_j \epsilon_k + \cdots. 
 \end{align*}
Because each term of the Taylor expansions coincides except for a constant, 
 $ \theta ( z^{\ast} + \epsilon , z^{\ast} ) $ is bounded above in a neighborhood of $ \epsilon = 0 $ (note that the denominator is positive when $ \epsilon \neq 0 $). 
Moreover, the definition of $ \dgp V $ can be rewritten in the form 
 \[ \dgp V(z,z') = \nabla V(z') + \theta ( z, z') \left( \nabla V(z) - \nabla V(z') \right). \]
 Therefore, in this case, $ \dgp V $ is continuous everywhere. 

\end{document}